\newtheorem{thm}{Theorem}[section]
\newtheorem{prop}[thm]{Proposition}
\theoremstyle{definition}
\newtheorem{definition}[thm]{Definition}
\theoremstyle{remark}
\newtheorem{remark}[thm]{Remark}
\numberwithin{equation}{section} 
\newcommand{\thickhline}{%
	\noalign {\ifnum 0=`}\fi \hrule height 1pt
	\futurelet \reserved@a \@xhline
}
\newcommand{\conv}{\mbox{\rm conv}\,}
\newcommand{\cl}{\mbox{\rm cl}\,}
\newcommand{\dom}{{\rm dom}\,}
\newcommand{\gph}{\mbox{\rm gph}\,}
\def\R{\mathbb{R}}
\def\N{\mathbb{N}}
\newcommand{\Limsup}{\mathop{{\rm Lim}\,{\rm sup}}}
\begin{document}
\title[Second-order KKT optimality conditions for $C^{1,1}$ VOPs]{On second-order Karush--Kuhn--Tucker optimality conditions for $C^{1,1}$ vector optimization problems}
\author[N.V. Tuyen]{Nguyen Van Tuyen$^{1,*}$}
\maketitle
\vspace*{-0.6cm}
\begin{center}
{\footnotesize {\it
$^1$Department of Mathematics, Hanoi Pedagogical University 2, Xuan Hoa, Phuc Yen, Vinh Phuc, Vietnam
}}\end{center} 

\begin{abstract} This paper focuses on optimality conditions for $C^{1,1}$ vector optimization problems with inequality constraints. By employing the limiting second-order subdifferential and the second-order tangent set, we introduce a new type of  second-order constraint qualification in the sense of Abadie. Then we establish some second-order  necessary optimality conditions of Karush--Kuhn--Tucker-type for local (weak) efficient solutions of the considered problem.  In addition, we provide some sufficient conditions for a local efficient solution of the such problem. The obtained results improve existing ones in the literature.
\end{abstract}

\maketitle

\noindent {\bf Keywords.}
Limiting second-order subdifferential,
second-order tangent set, efficient point, second-order optimality conditions 

\renewcommand{\thefootnote}{}
\footnotetext{ $^*$Corresponding author.
\par
E-mail addresses: nguyenvantuyen83@hpu2.edu.vn; tuyensp2@yahoo.com (N.V. Tuyen) 
\par 
2020 Mathematics Subject Classification: 49K30; 90C29; 90C46; 49J52; 49J53   
}

\section{Introduction}
The investigation of optimality conditions is one of the most attractive topics in optimization theory.  It is well-known that, without the convexity,  first-order  optimality conditions (Fritz John/Karush--Kuhn--Tucker type) are usually not sufficient ones. This motivated mathematicians to study second-order optimality conditions.  The second-order optimality conditions complement first-order ones in eliminating non-optimal solutions. For $C^2$ (twice continuously differentiable) constrained optimization problems, it is well-known that  the positive definiteness of the Hessian of the associated Lagrangian function is a sufficient condition for the optimality; see, for example,  \cite{Bonnans2000}. For non-$C^2$  problems, to obtain the second-order optimality conditions, many different kinds of generalized second-order  derivatives have been proposed; see, for example, \cite{Dhara12,Guerraggio01,Guerragio011,novo10,hien84,Huy16,Jeyakumar08,Maeda04,Mor92,Mor06a,Mor12,Mor13,Mor132,Mor133,Mor142,Mor143,Mor15,Mor152,Mordukhovich_2018,Mordukhovich-2024,Tuan15,Tuan16}.  

In the literature, there are two generally independent approaches  dealing with generalized second-order differentiations. The first one is based on the Taylor expansion, while the other is defined by induction, i.e., the second-order derivative of a real-valued function is the derivative of its first-order one. In \cite{Mor92}, Mordukhovich proposed a new approach to construct second-order subdifferentials of extended-real-valued functions as the  coderivative  of the subgradient mapping.   The    second-order subdifferential theory, as introduced by Mordukhovich,  and its modification  were successfully employed in the study of a broad spectrum of other important issues in variational
analysis and its applications; see, for example, \cite{Mor92,Mor94,Mor06a,Levy00,Mordukhovich_2018,Mordukhovich-2024,Poliquin98}. We refer the reader to the recent book by Mordukhovich~\cite{Mordukhovich-2024}. This comprehensive work, consisting of nine  chapters, provides a valuable reference for recent researchers in this area. 

In \cite{Huy16},  Huy and Tuyen introduced the concept of second-order symmetric subdifferential and developed its calculus rules. By using the second-order symmetric subdifferential, the second-order tangent set and the asymptotic second-order tangent cone, they established some second-order necessary and sufficient optimality conditions for optimization problems with geometric constraints. As shown in \cite{Huy16}, the second-order symmetric subdifferential may be strictly smaller than the Clarke  subdifferential, and  has some nice properties. In particular, every $C^{1,1}$ function has Taylor expansion in terms of its second-order symmetric subdifferential. Thereafter, in \cite{Huy162,Tuyen-Huy-Kim}, the authors used second-order symmetric subdifferentials to derive second-order optimality conditions of Karush--Kuhn--Tucker (KKT) type for $C^{1,1}$ vector optimization problems with inequality constraints. Then, in \cite{Feng_Li_2020}, Feng and Li introduced a  Taylor formula in the form of inequality for limiting second-order subdifferentials  and obtained some second-order Fritz John type optimality
conditions for $C^{1,1}$ scalar optimization problems with inequality constraints.  Recently, An and Tuyen \cite{An-Tuyen-24} derived some optimality conditions for $C^{1,1}$  optimization problems subject to inequality and equality constraints by employing the concept of limiting (Mordukhovich) second-order subdifferentials to the Lagrangian function associated with the considered problem.

The aim of this work to extend and improve results in \cite{Feng_Li_2020,Huy162,Tuyen-Huy-Kim}  to $C^{1,1}$ vector optimizations problems. To do this, we first introduce a new type of second-order constraint qualification in the sense of Abadie and some sufficient conditions for this constraint qualification. Under the Abadie second-order constraint qualification, we obtain second-order KKT necessary optimality conditions for efficiency of the considered problem. We also derive a second-order sufficient optimality condition of strong KKT-type for local efficient solutions.

The rest of this paper is organized as
follows.  In Section \ref{section2}, we recall some definitions and preliminary results from variational analysis and generalized differentiation. Section \ref{Abadie_SORC} presents main results. Section \ref{Discussion} draws some conclusions.   
\section{Preliminaries} \label{section2}
Throughout the paper, the considered spaces are finite-dimensional Euclidean with the inner product and the norm being denoted by $\langle \cdot, \cdot \rangle$ and by $\|\cdot\|$, respectively. 

For $a, b\in\R^m$, by $a\leqq b$, we mean $a_l\leq b_l$ for all $l=1, \ldots, m$; by $a\leq b$, we mean $a\leqq b$ and $a\neq b$; and by $a<b$, we mean $a_l<b_l$ for all $l=1, \ldots, m$.
  
Let $\Omega$ be a nonempty subset in $\R^n$. The  {\it closure} and {\it convex hull}  of $\Omega$ are denoted, respectively, by $\mbox{cl}\,\Omega$ and  $\conv\,\Omega$. The unit sphere in $\R^n$ is denoted by $\mathbb{S}^n$. We denote the nonnegative orthant in $\R^n$ by $\R^n_+$.  
 
 Let  $F : \R^n \rightrightarrows \R^m$ be a set-valued mapping. The \textit{domain} and the \textit{graph}  of $F$ are given, respectively, by
 $${\rm dom}\,F=\{x\in \R^n : F(x)\not= \emptyset\} $$
 and
 $$ {\rm gph}\,F=\{(x,y)\in \R^n \times \R^m : y \in F(x)\}.$$
 The set-valued mapping $F$ is called   \textit{proper} if $\dom F \not= \emptyset.$ The \textit{Painlev\'e-Kuratowski outer/upper limit} of $F$ at $\bar x$ is defined by
 \begin{align*} 
 	\Limsup\limits_{x\rightarrow \bar x} F(x):=\bigg\{ y\in \mathbb{R}^m : \exists x_k \rightarrow \bar x, y_k \rightarrow y \ \mbox{with}\ y_k\in F(x_k), \forall k=1,2,....\bigg\}.
 \end{align*} 
 \begin{definition}{\rm Let $\Omega$ be a nonempty subset in $\R^n$,   $\bar x\in \Omega$, and $u\in\R^n$. 
\begin{enumerate}[\rm(i)]
\item The {\em tangent cone} to $\Omega$ at $\bar x\in \Omega$ is defined by 
 		$$T(\Omega; \bar x):=\{d\in\R^n\,:\,\exists t_k\downarrow 0, \exists d^k\to d, \bar x+t_kd^k\in \Omega, \ \ \forall k\in \N\}.$$	
\item The {\em second-order tangent set} to $\Omega$ at $\bar x$ with respect to the direction $u$ is defined by 
 		$$T^2(\Omega; \bar x, u):=\left\{v\in\R^n:\exists t_k\downarrow 0, \exists v^k\to v, \bar x+t_ku+\frac12t_k^2v^k\in \Omega,\, \forall k\in \N\right\}.$$
\end{enumerate}		
 	}	
 \end{definition}
 By definition, $T(\,\cdot\,; \bar x)$ and $T^2(\,\cdot\,; \bar x, u)$  are isotone, i.e., if $\Omega^1\subset \Omega^2$, then
 \begin{align*}
 T(\Omega^1; \bar x)\subset T(\Omega^2; \bar x)\ \ \text{and}\ \ T^2(\Omega^1; \bar x, u)\subset  T^2(\Omega^2; \bar x, u).
 \end{align*}

It is well-known that $T(\Omega; \bar x)$ is a nonempty closed cone, $T^2(\Omega; \bar x, u)$ is closed, and $T^2(\Omega; \bar x, u)=\emptyset$ if $u\notin T(\Omega; \bar x)$. We refer the reader to \cite{Giorgi et al,Khan-et al} and the bibliography therein for other interesting properties of the above tangent sets.

\begin{definition}[{see~\cite{Mor06a}}] \rm  
	Let $\Omega$ be a nonempty subset of $\mathbb{R}^n$ and $\bar x \in \Omega$. The \textit{Fr\'echet/regular normal cone}  to $\Omega$ at $\bar x$ is defined by
	\begin{align*}
		\widehat N(\bar x, \Omega)=\Big\{ v\in \mathbb{R}^n : \limsup\limits_{x \xrightarrow{\Omega}\bar x} \dfrac{\langle v, x-\bar x \rangle}{\|x-\bar x\|} \leq 0 \Big\},
	\end{align*}
	where $x \xrightarrow{\Omega} \bar x$ means that $x \rightarrow \bar x$ and $ x\in \Omega$. The \textit{limiting/Mordukhovich normal cone} to $\Omega$ at $\bar x$ is given by
	\begin{align*}
		N(\bar x, \Omega)=\Limsup\limits_{ x \xrightarrow{\Omega} \bar x} \widehat{N}(x, \Omega).
	\end{align*}
	We put $\widehat N(\bar x,\Omega)=N(\bar x,\Omega): =\emptyset$ if $\bar x \not\in \Omega$.
\end{definition}	

By definition, one has $\widehat N(\bar x,\Omega) \subset N(\bar x,\Omega)$ and when $\Omega$ is convex, then the regular normals to $\Omega$  at $\bar x$ coincides with the limiting normal cone and both constructions reduce to the normal cone in the sense of convex analysis, i.e.,
\begin{align*}
	\widehat N(\bar x,\Omega) = N(\bar x,\Omega):=\{ v\in \mathbb{R}^n : \langle v,x- \bar x \rangle \le 0,  \ \forall x\in \Omega\}.
\end{align*}
 
Consider an {\it extended-real-valued function}   $\varphi\colon \R^n \to \overline \R:=\R\cup\{+\infty\}$. The {\it  epigraph}, {\it  hypergraph} and  {\it domain} of $\varphi$ are denoted, respectively, by
 \begin{align*}
 	\mbox{epi }\varphi&:=\{(x, \alpha)\in\R^n\times\R \,:\,  \alpha\geq \varphi(x) \},
 	\\
 	\mbox{hypo }\varphi &:=\{(x, \alpha)\in\R^n\times\R \,:\, \alpha\leq \varphi(x) \}, 
 	\\
 	\mbox{dom }\varphi &:= \{x\in \R^n \,:\, \varphi(x)<+\infty \}.
 \end{align*}
 The function $\varphi$ is called {\em proper} if $\dom\varphi$ is nonempty.

 \begin{definition}[{see \cite{Mor06a}}]{\rm 
 		Given $\bar x\in \mbox{dom }\varphi.$ The sets
 		\begin{align*}
 			\partial \varphi (\bar x)&:=\{x^*\in \R^n \,:\, (x^*, -1)\in N((\bar x, \varphi (\bar x)); \mbox{epi }\varphi )\}
 			\\
 			\partial ^+ \varphi (\bar x)&:=\{x^*\in \R^n \,:\, (-x^*, 1)\in N((\bar x, \varphi (\bar x)); \mbox{hypo }\varphi )\},
 			\\
 			\partial _S \varphi (\bar x)&:=\partial \varphi (\bar x)\cup \partial^+ \varphi (\bar x), 
 			\\
 			\partial _C \varphi (\bar x)&:=\cl\conv \partial \varphi (\bar x)
 		\end{align*}
 		are called the {\it limiting/Mordukhovich subdifferential}, the {\em upper subdifferential}, the {\em symmetric subdifferential}, and the {\em Clarke subdifferential} of $\varphi$ at $\bar x$, respectively. If $\bar x\notin\dom\varphi$, then we put 
 		$$\partial \varphi (\bar x)=\partial ^+ \varphi (\bar x)=\partial_S \varphi (\bar x)=\partial_C \varphi (\bar x):=\emptyset.$$
 			}
 \end{definition}
In contrast with the Clarke subdifferential, the limiting (symmetric) subdifferential may be nonconvex and, by definition, it is clear that
\begin{equation}\label{equa-new-1}
\partial \varphi (\bar x) \subseteq \partial _S \varphi (\bar x) \subseteq \partial _C \varphi (\bar x),
\end{equation}
and both inclusions may be strict; see \cite[pp. 92--93]{Mor06a}. 

\begin{definition}[{see~\cite[Definition 1.11]{Mordukhovich_2018}}]  \rm   Let $  F: \mathbb{R}^n\rightrightarrows{\mathbb{R}^m}$ be a set-valued mapping and  $(\bar x, \bar y) \in  {\rm{gph}}\, F$.	The  \textit{limiting/Mordukhovich coderivative}  of  $F$ at $(\bar x, \bar y)$ is a multifunction $D^* F(\bar x, \bar y): \mathbb{R}^m \rightrightarrows{\mathbb{R}^n}$ with the values
	\begin{equation}\label{equa-new-2}
		D^* F(\bar x,\bar y)(u):= \left\{v\in \mathbb{R}^n : (v, -u) \in  N\left((\bar x, \bar y),\gph\, F\right)\right\}, \ \ u \in \mathbb{R}^m.
	\end{equation}
	If $(\bar x, \bar y) \notin {\rm{gph}}\, F$, we  put $ D^* F(\bar x, \bar y)(u):=\emptyset$ for any $u\in \mathbb{R}^m$. When $F$ is single-valued at $\bar x$ with $\bar y=F(\bar x)$,  the symbol $\bar y$ in the notation  $D^* F(\bar x,\bar y)$ will be omitted. 
\end{definition}	 
If the limiting normal cone in \eqref{equa-new-2} is replaced by Clarke normal one, then the set 
$$D_C^* F(\bar x,\bar y)(u):= \left\{v\in \mathbb{R}^n : (v, -u) \in  N_C\left((\bar x, \bar y),\gph\, F\right)\right\}, \ \ u \in \mathbb{R}^m$$
is called the {\em Clarke coderivative} of $F$ at $(\bar x, \bar y)$ with respect to $v$.

We now recall the definition of the limiting second-order subdifferential. This is first introduced by Mordukhovich in \cite{Mor92}.
 \begin{definition}\label{dn:11n}{\rm
 		Let $(\bar x, \bar y)\in \mbox{gph }\partial\varphi$. The {\it limiting/Mordukhovich second-order subdifferential}  of $\varphi$ at $\bar x$ relative to $\bar y$ is a set-valued mapping $\partial^2\varphi (\bar x, \bar y)\colon \R^n \rightrightarrows \R^n$ defined by
 		\begin{equation*}\label{eq:11n}
 			\partial^2\varphi (\bar x, \bar y)(u):=(D^*\partial\varphi)(\bar x, \bar y)(u)=\{v \,:\, (v, -u)\in N(((\bar x, \bar y)); \mbox{gph} \partial \varphi )\}, \ \ u\in\R^n.
 		\end{equation*}
 	}
 \end{definition}  
Note that if $\varphi$ is strictly differentiable at $\bar x$, then $\partial\varphi(\bar x)=\{\nabla\varphi(\bar x)\}$ with $\nabla\varphi(\bar x)$ being the Fr\'echet derivative of $\varphi$ at $\bar x$, see \cite[Corollary 1.82]{Mor06a}. Recall that the function $\varphi : \R^n\to\R^m$ is said to be {\em strictly differentiable} at $\bar x$ if and only if there is a linear continuous operator $\nabla \varphi(\bar x) : \R^n\to \R^m$, called the {\em Fr\'echet derivative} of $\varphi$ at $\bar x$, such that
\begin{equation*}
	\lim_{\substack{x\to\bar x\\ u\to\bar x}}\dfrac{\varphi(x)-\varphi(u)-\langle\nabla \varphi(\bar x), x-u\rangle}{\|x-u\|}=0.
\end{equation*} 
Clearly, if $\varphi\in C^{1,1}(\R^n)$, then $\varphi$ is strictly differentiable on $\R^n$ and so $\partial^2\varphi (\bar x, \bar y)(u)=(D^*\nabla\varphi)(\bar x)(u)$. We recall here that a real-valued function  is said to be a $C^{1,1}$ function if it is Fr\'echet differentiable with a locally Lipschitz gradient.

In Definition \ref{dn:11n}, if the limiting coderivative is replaced by the Clarke coderivative, then we obtain the corresponding {\em  Clarke second-order subdifferential} $\partial_C^2\varphi (\bar x, \bar y)$. 
\begin{prop}[{see \cite[Theorem 1.90]{Mor06a}}]
If $\varphi\in C^{1, 1}(\mathbb{R}^n)$, then one has 
\begin{equation*}\label{eq:14n}
	\partial^2 \varphi (\bar x)(u):= \partial^2\varphi (\bar x, \nabla \varphi(\bar x))(u)=(D^*\nabla\varphi)(\bar x)(u)=\partial\langle u, \nabla \varphi \rangle (\bar x) \ \ \forall u, \bar x\in \mathbb{R}^n.
\end{equation*} 
\end{prop}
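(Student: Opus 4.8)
The asserted chain has two genuine contents. The first, that $\partial^2\varphi(\bar x,\bar y)$ is supported only at $\bar y=\nabla\varphi(\bar x)$ and there coincides with $D^*\nabla\varphi(\bar x)$, is immediate from the observation already made above: a $C^{1,1}$ function is strictly differentiable, so $\partial\varphi(x)=\{\nabla\varphi(x)\}$ for every $x$ by \cite[Corollary 1.82]{Mor06a}, whence $\gph\partial\varphi=\gph\nabla\varphi$ and Definition~\ref{dn:11n} reduces verbatim to the coderivative of $\nabla\varphi$. The substance lies in the second content, the scalarization identity $D^*\nabla\varphi(\bar x)(u)=\partial\langle u,\nabla\varphi\rangle(\bar x)$, which I would prove for the single-valued, locally Lipschitz mapping $f:=\nabla\varphi\colon\R^n\to\R^n$ by a double inclusion, relying only on the limiting representations of $N(\,\cdot\,,\gph f)$ and of $\partial\langle u,f\rangle$ together with the Lipschitz property of $f$.

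The pivotal elementary step is that, since $f$ is Lipschitz around $\bar x$ with some modulus $L$, one has $\|f(x)-f(x')\|\le L\|x-x'\|$, so the contribution of the graph coordinate to the denominator in the defining quotient of the regular normal cone to $\gph f$ is harmless up to a multiplicative constant. A short computation then gives, for every $w\in\R^n$ and every $x$ near $\bar x$, the equivalence
\begin{equation*}
(v,-w)\in\widehat N\big((x,f(x)),\gph f\big)\quad\Longleftrightarrow\quad v\in\widehat\partial\langle w,f\rangle(x),
\end{equation*}
since both sides assert exactly that $\langle w,f(x')-f(x)\rangle\ge\langle v,x'-x\rangle+o(\|x'-x\|)$ as $x'\to x$. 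Granting this, the inclusion $\partial\langle u,f\rangle(\bar x)\subseteq D^*f(\bar x)(u)$ is routine: given $v\in\partial\langle u,f\rangle(\bar x)$, pick $x_k\to\bar x$ and $v_k\to v$ with $v_k\in\widehat\partial\langle u,f\rangle(x_k)$; then $(v_k,-u)\in\widehat N((x_k,f(x_k)),\gph f)$ by the equivalence, $f(x_k)\to f(\bar x)$ by continuity, and passing to the limit in the definition of $N(\,\cdot\,,\gph f)$ yields $(v,-u)\in N((\bar x,f(\bar x)),\gph f)$, i.e.\ $v\in D^*f(\bar x)(u)$.

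The reverse inclusion $D^*f(\bar x)(u)\subseteq\partial\langle u,f\rangle(\bar x)$ carries the only real subtlety, and I expect it to be the main obstacle. A vector $v\in D^*f(\bar x)(u)$ supplies $x_k\to\bar x$ and $(v_k,-u_k)\to(v,-u)$ with $(v_k,-u_k)\in\widehat N((x_k,f(x_k)),\gph f)$, hence $v_k\in\widehat\partial\langle u_k,f\rangle(x_k)$; but here the scalarizing vector $u_k$ drifts with $k$, so one cannot directly extract a regular subgradient of the \emph{fixed} function $\langle u,f\rangle$. The remedy is to split $\langle u_k,f\rangle=\langle u,f\rangle+\langle u_k-u,f\rangle$, note that the perturbation $\langle u_k-u,f\rangle$ is Lipschitz around $\bar x$ with modulus $\varepsilon_k:=L\|u_k-u\|\to0$, and absorb it into an enlargement: an elementary estimate shows $v_k\in\widehat\partial_{\varepsilon_k}\langle u,f\rangle(x_k)$, the $\varepsilon_k$-regular subdifferential. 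Since $x_k\to\bar x$, $\varepsilon_k\downarrow0$ and $v_k\to v$, the representation of the limiting subdifferential of a continuous function as the Painlev\'e--Kuratowski outer limit of its $\varepsilon$-regular subdifferentials (see \cite[Theorem 1.89]{Mor06a}) gives $v\in\partial\langle u,f\rangle(\bar x)$.

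Combining the two inclusions with the reduction of the first paragraph yields the full chain of equalities, recovering \cite[Theorem 1.90]{Mor06a} in the special case $f=\nabla\varphi$; since everything here is finite-dimensional, no Asplund-space machinery is required.
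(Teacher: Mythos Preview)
The paper does not supply its own proof of this proposition: it is stated with the attribution ``see~\cite[Theorem 1.90]{Mor06a}'' and left as a quoted background result. There is therefore no in-paper argument to compare against.

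Your proof is correct and is essentially the standard proof of the coderivative scalarization formula for single-valued locally Lipschitz mappings, specialized to $f=\nabla\varphi$. The reduction in your first paragraph is exactly what the paper already notes before stating the proposition. For the scalarization identity, your key equivalence $(v,-w)\in\widehat N((x,f(x)),\gph f)\Leftrightarrow v\in\widehat\partial\langle w,f\rangle(x)$ is valid because the Lipschitz estimate gives $\|x'-x\|\le\|(x'-x,f(x')-f(x))\|\le(1+L)\|x'-x\|$, so the two defining limsup/liminf quotients are equivalent. The forward inclusion then follows by passing to limits. For the reverse inclusion, your handling of the drifting $u_k$ via the $\varepsilon$-enlarged regular subdifferential and the representation of the limiting subdifferential as the outer limit of $\widehat\partial_{\varepsilon}$ is the standard device and is sound; the reference to \cite[Theorem~1.89]{Mor06a} is apt. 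In short, you have reproduced the proof that the paper outsources to Mordukhovich's monograph.
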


In \cite{Huy16}, the authors introduced the so-called the second-order symmetric subdifferential in the sense of Mordukhovich as follows. 
 
\begin{definition}[{see \cite[Definition 2.6]{Huy16}}]{\rm 		Let   $\varphi\in C^{1, 1}(\R^n)$ and $\bar x\in \R^n$. The {\em second-order  symmetric subdifferential} of $\varphi$ at $\bar x$ is a multifunction $\partial_S^2\varphi(\bar x)\colon \R^n \rightrightarrows \R^n$  defined by
 		\begin{equation*}
 			\partial_S^2\varphi (\bar x)(u):=\partial_S\langle u, \nabla \varphi \rangle (\bar x), \ \ \forall u\in \R^n.
 		\end{equation*}
 	}
 \end{definition}
By definition and \eqref{equa-new-1}, one has
$$\partial^2\varphi (\bar x)(u)\subset\partial_S^2\varphi (\bar x)(u) \subset \partial_C^2\varphi (\bar x, \bar y)(u)$$
and the above inclusions may be strict.

We end this section by recall some results on the properties of second-order subdifferentials
that will be needed in the sequel.
\begin{prop}[{see \cite{Huy16,Mor06a,Mordukhovich_2018}}]\label{property}
	Let $\varphi \in C^{1,1}(\R^n)$ and $\bar x\in \R^n$. The following assertions hold:
\begin{enumerate}[\rm (i)]
	\item For any $\lambda \ge 0$, one has $\partial^2 \varphi(\bar x)(\lambda u) = \lambda \partial^2 \varphi(\bar x)(u), \forall u\in \mathbb{R}^n.$
	\item For any $u\in \mathbb{R}^n$, the set  $\partial^2 \varphi(\bar x) (u)$ is nonempty and compact.
	\item For any $u\in \mathbb{R}^n$, the mapping $ \partial^2 \varphi(\cdot) (u)$  is locally bounded around $\bar x$ and if $x^k \to \bar x$, $v^k \to v$, where $v^k \in \partial^2 \varphi(x^k)(u)$  for all $k\in \mathbb{N}$, then $v\in \partial^2 \varphi(\bar x)(u)$. 
\end{enumerate} 
\end{prop}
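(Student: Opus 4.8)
The plan is to reduce all three assertions to the scalarization formula from the previous proposition, $\partial^2\varphi(\bar x)(u)=\partial\langle u,\nabla\varphi\rangle(\bar x)$ (valid since $\varphi\in C^{1,1}(\R^n)$, see \cite[Theorem~1.90]{Mor06a}), together with standard properties of the limiting subdifferential of locally Lipschitz functions. To this end, fix $u\in\R^n$ and set $g_u:=\langle u,\nabla\varphi(\cdot)\rangle\colon\R^n\to\R$. Since $\varphi\in C^{1,1}(\R^n)$, the gradient $\nabla\varphi$ is locally Lipschitz on $\R^n$; hence on any neighborhood where $\nabla\varphi$ is Lipschitz with modulus $L$, the function $g_u$ is Lipschitz with modulus $\|u\|L$. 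In particular $g_u$ is locally Lipschitz on $\R^n$ and $\partial^2\varphi(x)(u)=\partial g_u(x)$ for all $x\in\R^n$.

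For (i), note that $g_{\lambda u}=\lambda g_u$ for $\lambda\ge 0$. If $\lambda>0$, the elementary positive-homogeneity rule $\partial(\lambda g_u)(\bar x)=\lambda\,\partial g_u(\bar x)$ for the limiting subdifferential yields $\partial^2\varphi(\bar x)(\lambda u)=\lambda\,\partial^2\varphi(\bar x)(u)$. If $\lambda=0$, then $g_{\lambda u}\equiv 0$, so $\partial^2\varphi(\bar x)(0)=\{0\}=0\cdot\partial^2\varphi(\bar x)(u)$, the last equality using that $\partial^2\varphi(\bar x)(u)=\partial g_u(\bar x)$ is nonempty (see (ii)). For (ii), since $g_u$ is Lipschitz around $\bar x$, the set $\partial g_u(\bar x)$ is nonempty and bounded by \cite[Corollary~1.81]{Mor06a}, and it is closed, being a Painlev\'e--Kuratowski outer limit; hence $\partial^2\varphi(\bar x)(u)=\partial g_u(\bar x)$ is nonempty and compact.

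For (iii), pick a neighborhood $U$ of $\bar x$ on which $\nabla\varphi$ is Lipschitz with some modulus $L>0$. Then $g_u$ is $\|u\|L$-Lipschitz on $U$, so the Lipschitz estimate for the limiting subdifferential gives $\partial^2\varphi(x)(u)=\partial g_u(x)\subset\{v\in\R^n:\|v\|\le\|u\|L\}$ for every $x\in U$, which is the asserted local boundedness. For the closedness claim, let $x^k\to\bar x$, $v^k\in\partial^2\varphi(x^k)(u)=\partial g_u(x^k)$, and $v^k\to v$. Using that each $v^k$ belongs to $\partial g_u(x^k)=\Limsup_{x\to x^k}\widehat\partial g_u(x)$, a diagonal selection produces points $\tilde x^k\to\bar x$ and regular subgradients $\tilde v^k\in\widehat\partial g_u(\tilde x^k)$ with $\tilde v^k\to v$; hence $v\in\Limsup_{x\to\bar x}\widehat\partial g_u(x)=\partial g_u(\bar x)=\partial^2\varphi(\bar x)(u)$. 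Equivalently, one may invoke the robustness of the limiting subdifferential, $\Limsup_{x\to\bar x}\partial g_u(x)\subset\partial g_u(\bar x)$, which holds because $g_u$ is continuous near $\bar x$; see \cite{Mor06a,Mordukhovich_2018}.

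I expect the only genuinely nontrivial point to be the closedness part of (iii): it rests on the robustness of the \emph{limiting} construction, namely that a repeated sequential outer limit of regular subgradients yields no new elements. The remaining items are straightforward translations of well-known facts about limiting subdifferentials of locally Lipschitz functions through the scalarization $\partial^2\varphi(\bar x)(u)=\partial\langle u,\nabla\varphi\rangle(\bar x)$; in fact property (i) can alternatively be read off directly from the conic structure of the limiting normal cone in Definition \ref{dn:11n} (together with the Mordukhovich criterion to settle the case $\lambda=0$).
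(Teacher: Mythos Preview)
The paper does not give its own proof of this proposition; it simply records the three assertions as known facts with references to \cite{Huy16,Mor06a,Mordukhovich_2018}. Your argument is a correct, self-contained justification: reducing everything to the scalarization $\partial^2\varphi(\bar x)(u)=\partial\langle u,\nabla\varphi\rangle(\bar x)$ and then invoking standard properties of the limiting subdifferential of a locally Lipschitz function (positive homogeneity, nonemptiness/compactness, and outer semicontinuity/robustness) is exactly how one would prove this from first principles, and it matches the spirit of the cited sources. There is nothing further to compare.
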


The Taylor formula in  the  form of inequalities for $C^{1,1}$ functions, employing the limiting second-order subdifferential, plays an important role for our research. 
\begin{thm} [{see~\cite[Theorem 3.1]{Feng_Li_2020}}]\label{Taylor_formula}
	Let $\varphi$ be of class $C^{1,1}(\R^n)$ and $a,b\in\R^n$. Then, there exist $z\in\partial^2\varphi(\xi) (b-a)$, where $\xi \in [a,b]$, $z'\in \partial^2 \varphi(\xi') (b-a)$, where $\xi' \in [a,b]$, such~that
	$$ \dfrac{1}{2} \langle z', b-a \rangle \le \varphi(b)-\varphi(a)-\langle\nabla \varphi(a), b-a \rangle \le \dfrac{1}{2}\langle z, b-a \rangle.$$
\end{thm}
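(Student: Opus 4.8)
The plan is to prove the Taylor formula in Theorem \ref{Taylor_formula} by reducing the vector-free statement to a one-dimensional problem along the segment $[a,b]$, exactly as in the classical smooth case, and then to use the mean value theorem together with the coderivative description of $\partial^2\varphi$. First I would introduce the auxiliary function $\psi\colon[0,1]\to\R$ defined by $\psi(t):=\varphi(a+t(b-a))-\langle\nabla\varphi(a),t(b-a)\rangle$. Since $\varphi\in C^{1,1}(\R^n)$, the gradient $\nabla\varphi$ is locally Lipschitz, so $\psi$ is differentiable on $[0,1]$ with $\psi'(t)=\langle\nabla\varphi(a+t(b-a))-\nabla\varphi(a),\,b-a\rangle$, and $\psi'$ is itself Lipschitz on $[0,1]$; in particular $\psi\in C^{1,1}([0,1])$ with $\psi'(0)=0$. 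The quantity we want to sandwich is precisely $\psi(1)-\psi(0)=\varphi(b)-\varphi(a)-\langle\nabla\varphi(a),b-a\rangle$.

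Next I would apply the classical mean value theorem (or the Lipschitzian mean value estimates) to $\psi$ on $[0,1]$. Because $\psi'$ is Lipschitz but not necessarily differentiable, I would pass to a second application of a mean-value-type result: there exist $\tau,\tau'\in(0,1)$ such that $\psi(1)-\psi(0)\le\frac12\,\overline D\psi'(\tau)$ and $\psi(1)-\psi(0)\ge\frac12\,\underline D\psi'(\tau')$, where $\overline D,\underline D$ denote appropriate upper/lower derivate bounds; more carefully, one uses that $\psi(1)-\psi(0)-\psi'(0)=\int_0^1\psi'(s)\,ds$ and $\psi'(s)=\int_0^s(\psi')'(r)\,dr$ for a.e.\ derivative, combined with the fact that any such derivate value of $\psi'$ is attained as a limit of difference quotients and hence lies in the Clarke/limiting subdifferential $\partial(\psi')$ at some point. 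The essential point is that the difference quotients of $\psi'$ are built from difference quotients of $\nabla\varphi$ contracted with $b-a$, and by the coderivative/subdifferential calculus $\partial\langle u,\nabla\varphi\rangle(\xi)=\partial^2\varphi(\xi)(u)$ with $u=b-a$, so the derivate bounds translate into elements $z\in\partial^2\varphi(\xi)(b-a)$ and $z'\in\partial^2\varphi(\xi')(b-a)$ for some $\xi,\xi'\in[a,b]$, and $\overline D\psi'(\tau)=\langle z,b-a\rangle$, $\underline D\psi'(\tau')=\langle z',b-a\rangle$.

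The cleanest route actually available is to invoke the already-known one-dimensional $C^{1,1}$ Taylor expansion with the limiting second-order subdifferential (this is the scalar prototype behind \cite{Feng_Li_2020}): for $g\in C^{1,1}([0,1])$ one has $g(1)-g(0)-g'(0)\in\frac12[\,\underline m,\overline M\,]$ where $\underline m=\min\partial^2 g(t')(1)$ and $\overline M=\max\partial^2 g(t)(1)$ for suitable $t,t'\in[0,1]$, using Proposition \ref{property}(ii) (nonemptiness and compactness of the second-order subdifferential set). Applying this to $g=\psi$ and then using the chain rule for the limiting second-order subdifferential under the affine parametrization $t\mapsto a+t(b-a)$ — which identifies $\partial^2\psi(t)(1)$ with $\langle b-a,\partial^2\varphi(a+t(b-a))(b-a)\rangle$ — yields the claimed inequalities.

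The main obstacle I expect is the chain-rule / change-of-variables step: one must show that the one-dimensional second-order subdifferential of the composition $\psi=\varphi\circ\ell$ (with $\ell(t)=a+t(b-a)$ affine) relates correctly to $\partial^2\varphi$ along the segment, i.e.\ that every derivate limit of $\psi'=\langle\nabla\varphi\circ\ell,\,b-a\rangle$ is of the form $\langle z,b-a\rangle$ with $z\in\partial^2\varphi(\xi)(b-a)$, $\xi\in[a,b]$. This is where the coderivative characterization $\partial^2\varphi(\xi)(u)=\partial\langle u,\nabla\varphi\rangle(\xi)$ (valid for $C^{1,1}$ functions, as recalled just before the statement) and the robustness property in Proposition \ref{property}(iii) are indispensable: they guarantee that limits of difference quotients of $\langle b-a,\nabla\varphi\rangle$ taken along points converging into $[a,b]$ remain inside $\partial^2\varphi(\xi)(b-a)$ for the limiting base point $\xi$. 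Once that identification is in hand, the two inequalities follow by taking, respectively, an upper and a lower derivate of $\psi'$ and applying the mean value theorem to $\psi$.
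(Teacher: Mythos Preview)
The paper does not actually supply a proof of Theorem~\ref{Taylor_formula}; it merely quotes the result from \cite[Theorem~3.1]{Feng_Li_2020}. There is therefore no ``paper's own proof'' to compare against, and your proposal must be judged on its own merits.

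Your reduction to the scalar function $\psi(t)=\varphi(a+t(b-a))-t\langle\nabla\varphi(a),b-a\rangle$ is natural and correct, and the observation $\psi\in C^{1,1}$, $\psi'(0)=0$, $\psi(1)-\psi(0)=\varphi(b)-\varphi(a)-\langle\nabla\varphi(a),b-a\rangle$ is fine. Two points, however, are genuine gaps rather than routine details.

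First, the ``cleanest route'' you describe is circular: you invoke the one-dimensional $C^{1,1}$ Taylor inequality with limiting second-order subdifferential as if it were already available, but that scalar statement is exactly the $n=1$ case of what you are asked to prove. Either you must prove the scalar case from scratch (which requires an argument you have not supplied), or you must work directly in $\R^n$.

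Second, and more seriously, the chain-rule identification you flag as ``the main obstacle'' is indeed one, and your sketch does not resolve it. For the \emph{Clarke} subdifferential, composition with an affine map $\ell(t)=a+t(b-a)$ gives the clean equality $\partial_C(\Phi\circ\ell)(t)=\langle b-a,\partial_C\Phi(\ell(t))\rangle$, but for the \emph{limiting} subdifferential this fails in general when the inner map is not surjective (here $\ell\colon\R\to\R^n$ with $n>1$). The inclusion you need---that every element of $\partial(\psi')(t)$ arises as $\langle z,b-a\rangle$ for some $z\in\partial\langle b-a,\nabla\varphi\rangle(\ell(t))=\partial^2\varphi(\ell(t))(b-a)$---is precisely the nontrivial direction, and neither Proposition~\ref{property}(iii) nor the coderivative formula $\partial^2\varphi(\xi)(u)=\partial\langle u,\nabla\varphi\rangle(\xi)$ delivers it. A workable route is to bypass the one-dimensional subdifferential entirely: apply a mean-value inequality for the Lipschitz function $\Phi(x)=\langle\nabla\varphi(x),b-a\rangle$ \emph{directly in $\R^n$} along the segment $[a,b]$ (e.g.\ the approximate mean value theorem of Mordukhovich--Zagrodny for the upper bound, and its counterpart for $-\Phi$ together with $\partial(-\Phi)=-\partial^+\Phi$ and the relation $\partial^2\varphi(\xi)(-u)=-\partial^+\langle u,\nabla\varphi\rangle(\xi)$ for the lower bound), and then integrate. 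As written, your proposal identifies the right obstacle but does not overcome it.
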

 
 
 \section{Main results}
 \label{Abadie_SORC}
  In this paper, we investigate the following constrained vector optimization  problem
 \begin{align*}
 	& \text{min}_{\R^m_+}\, f(x)\label{problem} \tag{VOP}
 	\\
 	&\text{s. t.}\ \ x\in X:=\{x\in\mathbb{R}^n\,:\, g(x)\leqq 0\},
 \end{align*}
 where $f:=(f_l)$, $l\in L:=\{1, \ldots, m\}$, and  $g:=(g_i)$, $i\in I:=\{1, \ldots, p\}$, are vector-valued functions with $C^{1,1}$ components defined on $\R^n$. 
 \subsection{Abadie second-order constraint qualification}
 In this subsection, we propose a type  of second-order constraint qualification in the sense of Abadie for problem  \eqref{problem}  and establish some conditions which assure  that this constraint qualification holds true. 
 
Fix any $\bar x\in X$ and $u\in\R^n$. Then by Proposition \ref{property}(ii), $\partial^2f_l(\bar x)(u)$, $l\in L$, and $\partial^2g_i(\bar x)(u)$, $i\in I$,  are nonempty and compact sets. Hence, there exist  $\xi^{*l}$ and  $\xi_*^{l}$ (resp., $\zeta^{*i}$ and  $\zeta_*^{i}$)  are elements  in  $\partial^2f_l(\bar x)(u)$  (resp., $\partial^2g_i(\bar x)(u)$)   such that    
 \begin{align*}
 	\langle\xi^{*l}, u\rangle&:=\max \left\{\langle \xi^l, u\rangle\,:\, \xi^l\in \partial^2f_l(\bar x)(u)\right\},\ \ l\in L,
 	\\
 	\langle\xi_*^{l}, u\rangle&:=\min \left\{\langle \xi^l, u\rangle\,:\, \xi^l\in \partial^2f_l(\bar x)(u)\right\},\ \ l\in L,
 	\\
 	\langle\zeta^{*i}, u\rangle&:=\max \left\{\langle \zeta^i, u\rangle\,:\,\zeta^i\in \partial^2g_i(\bar  x)(u) \right\},\ \ i\in I,
 	\\
 	\langle\zeta_*^{i}, u\rangle&:=\min \left\{\langle \zeta^i, u\rangle\,:\,\zeta^i\in \partial^2g_i(\bar x)(u) \right\},\ \ i\in I.
 \end{align*}
For any  $a=(a_1, a_2)$ and $b=(b_1, b_2)$ in $\R^2$, we denote the lexicographic order by
\begin{align*}
	a&\leqq_{\rm lex} b,\ \  {\rm if} \ \ a_1<b_1\ \   {\rm or} \ \  a_1=b_1\ \ {\rm and }\ \   a_2\leq b_2,
	\\
	a&<_{\rm lex} b,\ \  {\rm if} \ \ a_1<b_1\ \   {\rm or} \ \  a_1=b_1\ \ {\rm and }\ \   a_2< b_2.
\end{align*}
 For  $u, v \in\R^n$, put
 \begin{align*} 
 	F^2_l(u, v)&:=\left(\langle \nabla f_l(\bar x), u\rangle, \langle \nabla f_l(\bar x), v\rangle+\langle\xi^{*l}, u\rangle\right),\ \ l\in L\\
 	G^2_i(u, v)&:=\left(\langle \nabla g_i(\bar x), u\rangle, \langle \nabla g_i(\bar x), v\rangle+\langle\zeta^{*i}, u\rangle\right),\ \ i\in I,
 	\\
 	F^{2-}_l(u, v)&:=\left(\langle \nabla f_l(\bar x), u\rangle, \langle \nabla f_l(\bar x), v\rangle+\langle\xi_*^{l}, u\rangle\right),\ \ l\in L\\
 	G^{2-}_i(u, v)&:=\left(\langle \nabla g_i(\bar x), u\rangle, \langle \nabla g_i(\bar x), v\rangle+\langle\zeta_*^{i}, u\rangle\right),\ \ i\in I,
 \end{align*}
 \begin{equation*}
 	L^2(X; \bar x, u):=\{v\in\R^n\, :\, G^2_i(u, v)\leqq_{\rm lex} (0,0),\ \ i\in I(\bar x)\},
 \end{equation*}
and 
 \begin{equation*}
	L^{-2}(X; \bar x, u):=\{v\in\R^n\, :\, G^{-2}_i(u, v)\leqq_{\rm lex} (0,0),\ \ i\in I(\bar x)\},
\end{equation*}
where $I(\bar x)$ is the {\em  active index set} to $\bar x$ and defined by
$$I(\bar x):=\{i\in I : g_i(\bar x)=0\}.$$

By definition, it is clear that $L^2(X; \bar x, u)\subset L^{2^-}(X; \bar x, u)$. 
 
The following result gives an upper estimate of the second-order tangent set to the constraint set of problem \eqref{problem}.  
 \begin{prop}\label{lemma1}
 	Let $u\in\R^n$ be any vector.  Then the following inclusion  holds
 	$$T^2(X; \bar x, u)\subset  L^{2^-}(X; \bar x, u).$$
 \end{prop}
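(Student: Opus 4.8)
The plan is to take an arbitrary $v\in T^2(X;\bar x,u)$ and show that for every $i\in I(\bar x)$ we have $G^{2^-}_i(u,v)\leqq_{\rm lex}(0,0)$. By definition of the second-order tangent set there exist $t_k\downarrow 0$ and $v^k\to v$ with $x_k:=\bar x+t_ku+\tfrac12 t_k^2 v^k\in X$, so in particular $g_i(x_k)\leq 0=g_i(\bar x)$ for all $i\in I(\bar x)$ and all $k$. Fix such an $i$. First I would apply the Taylor formula of Theorem \ref{Taylor_formula} to $g_i$ with $a=\bar x$ and $b=x_k$: writing $b-a=t_ku+\tfrac12 t_k^2 v^k=:w_k$, there is $z_k'\in\partial^2 g_i(\xi_k')(w_k)$ with $\xi_k'\in[\bar x,x_k]$ such that
\begin{equation*}
\tfrac12\langle z_k',w_k\rangle\leq g_i(x_k)-g_i(\bar x)-\langle\nabla g_i(\bar x),w_k\rangle\leq 0.
\end{equation*}
Hence $\langle\nabla g_i(\bar x),w_k\rangle+\tfrac12\langle z_k',w_k\rangle\leq 0$, i.e.
\begin{equation*}
t_k\langle\nabla g_i(\bar x),u\rangle+\tfrac12 t_k^2\langle\nabla g_i(\bar x),v^k\rangle+\tfrac12\langle z_k',w_k\rangle\leq 0.
\end{equation*}

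Next I would analyze the two coordinates of the lexicographic order separately. Dividing the displayed inequality by $t_k$ and letting $k\to\infty$ (using $w_k\to 0$, $v^k\to v$, and the local boundedness of $\partial^2 g_i(\cdot)(w_k)$ near $\bar x$ from Proposition \ref{property}(iii) to kill the quadratic-order remainder terms) yields $\langle\nabla g_i(\bar x),u\rangle\leq 0$. This settles the first lexicographic coordinate. If $\langle\nabla g_i(\bar x),u\rangle<0$ we are already done for this $i$, so assume $\langle\nabla g_i(\bar x),u\rangle=0$. Then dividing the inequality instead by $\tfrac12 t_k^2$ gives
\begin{equation*}
\langle\nabla g_i(\bar x),v^k\rangle+\frac{\langle z_k',w_k\rangle}{t_k^2}\leq 0,
\end{equation*}
and the point is to show $\langle z_k',w_k\rangle/t_k^2$ has a limit inferior that is an element of the form $\langle\zeta,u\rangle$ with $\zeta\in\partial^2 g_i(\bar x)(u)$. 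Writing $w_k=t_k(u+\tfrac12 t_k v^k)$ and using positive homogeneity of $\partial^2 g_i(\xi_k')(\cdot)$ (Proposition \ref{property}(i)), we have $z_k'\in\partial^2 g_i(\xi_k')(w_k)=t_k\,\partial^2 g_i(\xi_k')(u+\tfrac12 t_k v^k)$, so $z_k'=t_k\,\tilde z_k$ with $\tilde z_k\in\partial^2 g_i(\xi_k')(u+\tfrac12 t_k v^k)$, and therefore
\begin{equation*}
\frac{\langle z_k',w_k\rangle}{t_k^2}=\langle\tilde z_k,u+\tfrac12 t_k v^k\rangle.
\end{equation*}
Since $\xi_k'\to\bar x$ and $u+\tfrac12 t_k v^k\to u$, the sequence $(\tilde z_k)$ is bounded (Proposition \ref{property}(iii)); passing to a subsequence, $\tilde z_k\to\zeta$, and the closedness/robustness in Proposition \ref{property}(iii) forces $\zeta\in\partial^2 g_i(\bar x)(u)$. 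Taking limits in the displayed inequality along this subsequence gives $\langle\nabla g_i(\bar x),v\rangle+\langle\zeta,u\rangle\leq 0$, and since $\langle\zeta_*^{i},u\rangle\leq\langle\zeta,u\rangle$ by the very definition of $\zeta_*^{i}$, we conclude $\langle\nabla g_i(\bar x),v\rangle+\langle\zeta_*^{i},u\rangle\leq 0$. Combined with $\langle\nabla g_i(\bar x),u\rangle=0$, this is exactly $G^{2^-}_i(u,v)\leqq_{\rm lex}(0,0)$, completing the proof.

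The subtlety I expect to be the main obstacle is the bookkeeping in the second coordinate: one must be careful that the intermediate points $\xi_k'$ from the Taylor formula depend on $k$ (and on $i$), and that one controls the remainder correctly when dividing by $t_k^2$ rather than $t_k$. The homogeneity trick $\partial^2 g_i(\xi_k')(t_k w)=t_k\,\partial^2 g_i(\xi_k')(w)$ is what makes the scaling work out cleanly, and the robustness property (Proposition \ref{property}(iii)) is what lets us transfer the limiting second-order subgradient from the moving base point $\xi_k'$ to $\bar x$. One should also note that $T^2(X;\bar x,u)=\emptyset$ unless $u\in T(X;\bar x)$, in which case the inclusion is trivial; and one should double-check that $t_k v^k\to 0$ so that $u+\tfrac12 t_k v^k\to u$, which is immediate. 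Everything else is routine limiting analysis, and the proof should go through without any additional hypotheses beyond $C^{1,1}$ smoothness of the $g_i$.
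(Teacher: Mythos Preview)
Your argument is essentially correct and reaches the same conclusion, but it differs from the paper's route in one respect that introduces a small gap you should patch. The paper does \emph{not} apply Theorem~\ref{Taylor_formula} directly on the segment $[\bar x,x^k]$; instead it inserts the intermediate point $\bar x+t_ku$ and splits
\[
g_i(x^k)-g_i(\bar x)=\bigl[g_i(x^k)-g_i(\bar x+t_ku)\bigr]+\bigl[g_i(\bar x+t_ku)-g_i(\bar x)-t_k\langle\nabla g_i(\bar x),u\rangle\bigr],
\]
handling the first bracket by the classical mean value theorem and the second by Theorem~\ref{Taylor_formula} along the \emph{fixed} direction $u$. The payoff is that the second-order subgradients $\zeta^k$ produced lie in $\partial^2 g_i(\sigma^k)(u)$ with $u$ fixed, so Proposition~\ref{property}(iii) applies verbatim.

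In your approach the rescaled subgradients satisfy $\tilde z_k\in\partial^2 g_i(\xi_k')(u+\tfrac12 t_k v^k)$, where the direction also varies with $k$. To pass to the limit you need outer semicontinuity of $(x,d)\mapsto\partial^2 g_i(x)(d)$ jointly in both arguments, which is \emph{stronger} than what Proposition~\ref{property}(iii) actually states (there $u$ is fixed). The joint statement is true---it follows from the robustness of the limiting normal cone to $\mathrm{gph}\,\nabla g_i$ since $\partial^2 g_i(x)(d)=D^*(\nabla g_i)(x)(d)$---but you should cite that explicitly rather than Proposition~\ref{property}(iii). A minor additional slip: in your first display the rightmost ``$\le 0$'' is not justified (it would require $g_i(x_k)-g_i(\bar x)-\langle\nabla g_i(\bar x),w_k\rangle\le 0$); what you actually need and correctly use afterward is $\langle\nabla g_i(\bar x),w_k\rangle+\tfrac12\langle z_k',w_k\rangle\le g_i(x_k)-g_i(\bar x)\le 0$.
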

\begin{proof}
Fix any $v\in T^2(X; \bar x, u)$. Then, there exist sequences $t_k\downarrow 0$  and   $v^k$ converging to $v$ such that
 $$x^k:=\bar x+t_ku+\frac12t_k^2v^k\in X,\ \ \forall k\in\N.$$ 
Hence, for each $i\in I(\bar x)$, one has  $g_i(x^k)-g_i(\bar x)\leq 0$ for all $k\in\N$. By the mean value theorem for differentiable functions, there exists $\theta^k\in(\bar x, x^k)$ such that 
 $$\langle \nabla g_i(\theta^k), t_ku+\frac12t_k^2v^k\rangle\leq 0,\ \ \forall k\in\N.$$
Dividing two sides of the above inequality by $t_k$ and letting $k\to\infty$, we obtain $\langle \nabla g_i(\bar x), u\rangle\leq 0$. We claim that 
$$G^{2^-}_i(u, v)\leqq_{\rm lex} (0, 0),$$
 or, equivalently,
 \begin{equation}\label{equa:1}
 	(\langle \nabla g_i(\bar x), u\rangle, \langle \nabla g_i(\bar x), v\rangle+\langle\zeta_*^{i}, u\rangle)\leqq_{\rm lex} (0, 0).
 \end{equation}
 Clearly, \eqref{equa:1} is satisfied if $\langle \nabla g_i(\bar x), u\rangle<0$. When  $\langle \nabla g_i(\bar x), u\rangle =0$, then we have
 $$g_i(x^k)-g_i(\bar x)=[g_i(x^k)-g_i(\bar x+t_ku)]+[g_i(\bar x+t_ku)-g_i(\bar x)-t_k\langle\nabla g_i(\bar x), u\rangle].$$
 By the mean value theorem for differentiable functions, there exists $\gamma^k\in (\bar x+t_ku, x^k)$ such that
 \begin{equation}
 	g_i(x^k)-g_i(\bar x+t_ku)=\langle \nabla g_i(\gamma^k),  \frac12t_k^2v^k\rangle=\frac12t_k^2\langle \nabla g_i(\gamma^k),  v^k\rangle. \label{equ:11}
 \end{equation}
By Theorem \ref{Taylor_formula}, there exist $\sigma^k\in (\bar x, \bar x+t_ku)$ and $w^k\in \partial^2g_i(\sigma^k)(t_ku)$ such that
 $$g_i(\bar x+t_ku)-g_i(\bar x)-t_k\langle\nabla g_i(\bar x), u\rangle \geq \frac12 \langle w^k, t_ku\rangle= \frac12t_k \langle w^k, u\rangle.$$
Since $\partial^2g_i(\sigma^k)(t_ku)=t_k\partial^2g_i(\sigma^k)(u)$, one has $w^k=t_k\zeta^k$ for some $\zeta^k\in \partial^2g_i(\sigma^k)(u)$. Thus 
\begin{equation}\label{equa-new-5}
g_i(\bar x+t_ku)-g_i(\bar x)-t_k\langle\nabla g_i(\bar x), u\rangle \geq \frac12t^2_k \langle \zeta^k, u\rangle.
\end{equation}
This and \eqref{equ:11} imply  that
 \begin{equation}\label{equa-new-3} 
 0\geq g_i(x^k)-g_i(\bar x)\geq\frac12t^2_s[\langle \nabla g_i(\gamma^k),  v^k\rangle+\langle \zeta^k, u\rangle].
 \end{equation}
Hence,
 \begin{equation}\label{equ:13}
 \langle \nabla g_i(\gamma^k),  v^k\rangle+\langle \zeta^k, u\rangle\leq 0,\ \  \forall k\in\N. 
 \end{equation}
 
 Since $\partial^2 g_i(\cdot)$ is locally bounded around $\bar x$ and  $\lim_{k\to\infty}\sigma^k=\bar x$, the sequence $\zeta^k$ is bounded. Without loss of any generality, we may assume that $\zeta^k$ converges to $\zeta^i$. By Proposition \ref{property}(iii), $\zeta^i\in \partial^2g_i(\bar x)(u)$. Since $g_i\in C^{1,1}(\R^n)$, one has
 $$\lim_{k\to\infty}\langle\nabla g_i(\gamma^k),  v^k\rangle=\langle\nabla g_i(\bar x),  v\rangle.$$
 Letting $k\to\infty$ in \eqref{equ:13} we arrive at $\langle\nabla g_i(\bar x),  v\rangle+ \langle \zeta^i, u\rangle\leq 0$. Consequently,
 $$\langle\nabla g_i(\bar x),  v\rangle+ \langle \zeta_*^i, u\rangle\leq 0.$$
 This means that \eqref{equa:1} holds true and so $v\in L^{2^-}(X; \bar x, u)$. The proof is complete.
\end{proof}
 
 We now introduce a type  of second-order  constraint qualification in the sense of Abadie. 
 \begin{definition} \rm Let  $\bar x\in X$ and $u\in\R^n$. We say that $\bar x$ satisfies  the {\em  Abadie second-order  constraint qualification} with respect to the direction $u$ if 
 	\[
 	L^{2}(X; \bar x, u)\subset  T^2(X; \bar x, u). \tag{$ASCQ$}\label{ASOCQ}
 	\]
 \end{definition}
\begin{remark} The \eqref{ASOCQ} at $\bar x$ with respect to the direction $u=0$ reduces to the well-known Abadie constraint qualification $(ACQ)$; see \cite{Abadie}. As shown in  \cite{Abadie}, the $(ACQ)$ plays a fundamental role in establishing first-order  optimality conditions of the KKT form for nonlinear optimization problems.
\end{remark}
The following result ensures that the  \eqref{ASOCQ} holds at $\bar x$ with respect to $u$. 
 \begin{thm}\label{Pro-3.4} Let $\bar x\in X$ and  $u\in\R^n$. Suppose that the following system (in the unknown $w$)
 	\begin{eqnarray}
 	\langle \nabla g_i(\bar x), w\rangle+ \langle \zeta^{*i}, u\rangle<0, \ \ i\in I(\bar x; u),\label{equ:15}
 	\end{eqnarray}
 	has at least one solution, where
$$I(\bar x; u):=\{i\in I(\bar x): \langle\nabla g_i(\bar x), u\rangle=0 \}.$$ 
Then, the \eqref{ASOCQ} holds at $\bar x$ with respect to $u$.  
 \end{thm}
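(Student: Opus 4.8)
The plan is to deduce \eqref{ASOCQ} from \eqref{equ:15} by the classical ``Abadie-from-Mangasarian--Fromovitz'' scheme: first perturb an arbitrary $v\in L^2(X;\bar x,u)$ into vectors satisfying \emph{strict} second-order inequalities, then show that such strict vectors lie in $T^2(X;\bar x,u)$, and finally pass to the limit using the closedness of $T^2(X;\bar x,u)$. If $L^2(X;\bar x,u)=\emptyset$ there is nothing to prove, so fix $v\in L^2(X;\bar x,u)$; by the very definition of $L^2$ this forces $\langle\nabla g_i(\bar x),u\rangle\le 0$ for every $i\in I(\bar x)$ and $\langle\nabla g_i(\bar x),v\rangle+\langle\zeta^{*i},u\rangle\le 0$ for every $i\in I(\bar x;u)$. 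Let $w_0$ solve \eqref{equ:15} and, for $\varepsilon\in(0,1)$, put $v_\varepsilon:=(1-\varepsilon)v+\varepsilon w_0$, so $v_\varepsilon\to v$ as $\varepsilon\downarrow 0$. Writing $\langle\zeta^{*i},u\rangle=(1-\varepsilon)\langle\zeta^{*i},u\rangle+\varepsilon\langle\zeta^{*i},u\rangle$, one gets for $i\in I(\bar x;u)$
$$\langle\nabla g_i(\bar x),v_\varepsilon\rangle+\langle\zeta^{*i},u\rangle=(1-\varepsilon)\big[\langle\nabla g_i(\bar x),v\rangle+\langle\zeta^{*i},u\rangle\big]+\varepsilon\big[\langle\nabla g_i(\bar x),w_0\rangle+\langle\zeta^{*i},u\rangle\big]<0,$$
the first bracket being $\le 0$ and the second $<0$ by \eqref{equ:15}. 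Hence it suffices to prove the claim: if $v'$ satisfies $\langle\nabla g_i(\bar x),u\rangle<0$ for all $i\in I(\bar x)\setminus I(\bar x;u)$ and $\langle\nabla g_i(\bar x),v'\rangle+\langle\zeta^{*i},u\rangle<0$ for all $i\in I(\bar x;u)$, then $v'\in T^2(X;\bar x,u)$.

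To establish the claim I would take an arbitrary sequence $t_k\downarrow 0$, set $x^k:=\bar x+t_ku+\tfrac12 t_k^2 v'$, and show $x^k\in X$ for all large $k$; dropping finitely many terms, the constant sequence $v^k\equiv v'$ then witnesses $v'\in T^2(X;\bar x,u)$. Three cases occur. For $i\notin I(\bar x)$, continuity gives $g_i(x^k)<0$ eventually since $g_i(\bar x)<0$ and $x^k\to\bar x$. For $i\in I(\bar x)$ with $\langle\nabla g_i(\bar x),u\rangle<0$, Fr\'echet differentiability gives $g_i(x^k)=t_k\langle\nabla g_i(\bar x),u\rangle+O(t_k^2)<0$ eventually. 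For $i\in I(\bar x;u)$ I would mimic the splitting from the proof of Proposition~\ref{lemma1}: write $g_i(x^k)-g_i(\bar x)=[g_i(x^k)-g_i(\bar x+t_ku)]+[g_i(\bar x+t_ku)-g_i(\bar x)]$, treat the first bracket by the mean value theorem (after division by $t_k^2$ it converges to $\tfrac12\langle\nabla g_i(\bar x),v'\rangle$), and bound the second above by Theorem~\ref{Taylor_formula} applied with $a=\bar x$, $b=\bar x+t_ku$: using positive homogeneity (Proposition~\ref{property}(i)) to turn $\partial^2 g_i(\sigma^k)(t_ku)$ into $t_k\partial^2 g_i(\sigma^k)(u)$, there are $\sigma^k\in[\bar x,\bar x+t_ku]$ and $\zeta^k\in\partial^2 g_i(\sigma^k)(u)$ with $g_i(\bar x+t_ku)-g_i(\bar x)\le t_k\langle\nabla g_i(\bar x),u\rangle+\tfrac12 t_k^2\langle\zeta^k,u\rangle=\tfrac12 t_k^2\langle\zeta^k,u\rangle$. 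By the local boundedness and closed-graph property of $\partial^2 g_i(\cdot)(u)$ (Proposition~\ref{property}(iii)) and the definition of $\zeta^{*i}$, every cluster point of $\{\zeta^k\}$ lies in $\partial^2 g_i(\bar x)(u)$, whence $\limsup_k\langle\zeta^k,u\rangle\le\langle\zeta^{*i},u\rangle$. Combining, $\limsup_k t_k^{-2}g_i(x^k)\le\tfrac12\big(\langle\nabla g_i(\bar x),v'\rangle+\langle\zeta^{*i},u\rangle\big)<0$, so $g_i(x^k)<0$ eventually. Since $I$ is finite, a single threshold handles all constraints simultaneously, which proves the claim.

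Applying the claim to each $v_\varepsilon$ yields $v_\varepsilon\in T^2(X;\bar x,u)$, and since $T^2(X;\bar x,u)$ is closed and $v_\varepsilon\to v$, we get $v\in T^2(X;\bar x,u)$; thus $L^2(X;\bar x,u)\subset T^2(X;\bar x,u)$, i.e. \eqref{ASOCQ} holds at $\bar x$ with respect to $u$. I expect the only delicate step to be the second-order estimate in the case $i\in I(\bar x;u)$: one must keep the direction of the second-order subdifferential fixed equal to $u$ (so that positive homogeneity applies and the Taylor term reduces to $\tfrac12 t_k^2\langle\zeta^k,u\rangle$ with $\zeta^k\in\partial^2 g_i(\sigma^k)(u)$), and then pass to the limit through Proposition~\ref{property}(iii). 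The strict inequalities produced by the perturbation $v_\varepsilon$ are precisely what absorb the error terms and the gap between $\langle\zeta^k,u\rangle$ and $\langle\zeta^{*i},u\rangle$; everything else reduces to first-order expansions and continuity.
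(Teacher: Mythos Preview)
Your proposal is correct and follows essentially the same route as the paper: perturb $v\in L^2(X;\bar x,u)$ by the Mangasarian--Fromovitz direction $w_0$ to get strict second-order inequalities, then use the splitting $g_i(x^k)=[g_i(x^k)-g_i(\bar x+t_ku)]+[g_i(\bar x+t_ku)-g_i(\bar x)]$ together with Theorem~\ref{Taylor_formula} and Proposition~\ref{property}(i),(iii) to force $g_i(x^k)<0$ eventually in all three index cases. The only cosmetic difference is in the final passage to the limit: you invoke the closedness of $T^2(X;\bar x,u)$ to send $v_\varepsilon\to v$, whereas the paper couples a null sequence $r_k\downarrow 0$ with a null sequence $t_j\downarrow 0$ and builds a diagonal witness $x^{J_k}=\bar x+t_{J_k}u+\tfrac12 t_{J_k}^2 v^k$ directly; both arguments are equivalent and your version is arguably cleaner.
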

\begin{proof}
Let $\bar w\in\R^n$ be a solution of the system \eqref{equ:15} and fix any $v\in L^2(X; \bar x, u)$. We claim that $v\in T^2(X; \bar x, u)$.   Indeed, let $\{r_k\}$ and $\{t_j\}$ be any positive sequences converging to zero. We may assume that $r_k\in(0,1)$ for all $k\in\N$. For each $k\in\N$, put $v^k:=r_k\bar w+(1-r_k)v$. Clearly, $\displaystyle\lim_{k\to\infty}v^k=v$. Since
 $v\in L^2(X; \bar x, u)$, we have
 \begin{equation}\label{equa:6}
 G^2_i(u, v)=\left(\langle \nabla g_i(\bar x), u\rangle, \langle \nabla g_i(\bar x), v\rangle+\langle\zeta^{*i}, u\rangle\right)\leqq_{\rm lex} (0, 0), \ \ \ \forall i\in I(\bar x).
 \end{equation}
 This implies that $\langle \nabla g_i(\bar x), u\rangle\leq 0$ for all $i\in I(\bar x).$
  
For $k=1$, one has $v^1=r_1\bar w+(1-r_1)v$. We now show that the sequence $x^j:=\bar x+t_ju+\frac{1}{2}t_j^2v^1\in X$ for all $j$ large enough. To that end, we consider three cases as follows.
\\
 {\em Case 1.} $i\notin I(\bar x)$, i.e., $g_i(\bar x)<0$. Since $x^j\to \bar x$ as $j\to\infty$ and $g_i$ is continuous at $\bar x$, there exists $j_1\in\N$ such that $g_i(x^j)<0$ for all $j\geq j_1$.
\\
{\em Case 2.} $i\in I(\bar x)\setminus I^\prime(\bar x; u)$, i.e., $g_i(\bar x)=0$ and $\langle\nabla g_i(\bar x), u\rangle<0$. Since
$$\lim_{j\to\infty}\dfrac{g_i(x^j)}{t_j}=\lim_{j\to\infty}\dfrac{g_i(\bar x+t_ju+\frac12t^2_jv^{1})-g_i(\bar x)}{t_j}=\langle\nabla g_i(\bar x), u\rangle<0,$$ 
there is $j_2\in\N$ such that $g_i(x^j)<0$ for all $j\geq j_2$. 
\\
{\em Case 3.} $i\in I^\prime(\bar x; u)$, i.e., $g_i(\bar x)=0$ and $\langle\nabla g_i(\bar x), u\rangle=0$. It follows from \eqref{equa:6} that
$$\langle \nabla g_i(\bar x), v\rangle+\langle\zeta^{*i}, u\rangle\leq 0.$$
This and the fact that $\bar w$ is a solution of \eqref{equ:15} imply that
\begin{equation*}
 	\langle\nabla g_i(\bar x), v^1\rangle+\langle \zeta^{*i}, u\rangle=r_1[\langle\nabla g_i(\bar x), \bar w\rangle+\langle \zeta^{*i}, u\rangle]+(1-r_1)[\langle\nabla g_i(\bar x), v\rangle+\langle \zeta^{*i}, u\rangle]<0. \label{equa:2}
 \end{equation*}
By the right-hand side inequality of Theorem \ref{Taylor_formula} and an analysis similar to the one made in the proof of \eqref{equa-new-3} show  that there exist $\sigma_j\in (\bar x, \bar x+t_ju)$ and  $\zeta^{i_j}\in \partial^2 g_i(\sigma_j)(u)$ such that
 \begin{equation*} 
g_i(x^j)=g_i(x^j)-g_i(\bar x)-t_j\langle\nabla g_i(\bar x),u\rangle\leq 	\frac12t^2_j[\langle\nabla g_i(\bar x), v^{1}\rangle+\langle\zeta^{i_j},u\rangle],
 \end{equation*}
or, equivalently,
\begin{equation}\label{equ:18}
	\dfrac{g_i(x^j)}{\frac12t^2_j}\leq 	\langle\nabla g_i(\bar x), v^{1}\rangle+\langle\zeta^{i_j},u\rangle.
\end{equation}
Without any loss of generality, we may assume that $\zeta^{i_j}$ converges to some $\zeta^i\in \partial^2g_i(\bar x)(u)$ as $j\to\infty$. Taking the  limit superior in \eqref{equ:18} as $j\to\infty$, we obtain
\begin{align}
\limsup_{j\to\infty}\dfrac{g_i(x^j)}{\frac12t^2_j}&\leq \langle\nabla g_i(\bar x), v^{1}\rangle+\langle\zeta^{i},u\rangle\notag
\\
&\leq \langle\nabla g_i(\bar x), v^1\rangle+\langle \zeta^{*i}<0.\label{equa-new-4}
\end{align}
Hence, there exists $j_3\in\N$ such that $g_i(x^j)<0$ for all $j\geq j_3$.

Put $J_1:=\max\{j_1, j_2, j_3\}$, then $g_i(x^j)<0$ for all $j\geq J_1$. This implies that $x^{J_1}\in X$.  

Thus, by induction on $k$, we can construct a subsequence $x^{J_k}$  satisfying
 $$x^{J_k}=\bar x+t_{J_k}u+\frac12t^2_{J_k}v^{k}\in X,$$
 for all $k\in\N$. From this, $\displaystyle\lim_{k\to\infty}t_{J_k}=0$, and $\displaystyle\lim_{k\to\infty}v^k=v$ it follows that $v\in T^2(X; \bar x, u)$.  
 The proof is complete.   
\end{proof} 
 \subsection{Second-Order Optimality Conditions}
 \label{Second_Order_condition}
  
 \begin{definition}[see \cite{Ehrgott}]\label{efficient_def}{\rm Let $\bar x\in X$. We say that:
\begin{enumerate}[\rm(i)]
	\item $\bar x$ is  {\em an efficient solution} (resp., {\em a weak efficient solution}) to problem \eqref{problem}  if there is no $x\in X$ satisfying  $f(x)\leq f(\bar x)$. (resp., $f(x)<f(\bar x)$).
	
	\item $\bar x$ is a {\em local efficient solution} (resp., {\em local weak efficient solution}) to problem \eqref{problem} if it is efficient solution (resp., weak efficient solution) in $U\cap X$ with some neighborhood $U$ of $\bar x$.
\end{enumerate}
 	}
 \end{definition}
 
 The following theorem gives a   first-order necessary optimality condition  for weak efficiency of \eqref{problem}.
 \begin{thm}[{see \cite[Theorem 3.1]{Huy162}}]\label{first_order-nec} If $\bar x\in X$ is a local weak efficient solution to problem \eqref{problem} and the $(ACQ)$ holds at $\bar x$, then the following system has no solution $u\in\R^n$:
 	\begin{eqnarray*}
 		\langle \nabla f_l(\bar x), u\rangle&< 0, \ \ \ &l\in L,  
 		\\
 		\langle \nabla g_i(\bar x), u\rangle&\leq 0, \ \ \ &i\in I (\bar x).  
 	\end{eqnarray*}
 \end{thm}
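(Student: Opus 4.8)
The plan is to argue by contradiction, turning the statement into an immediate consequence of the Abadie constraint qualification and the first-order Taylor expansion of the objective. Suppose, contrary to the assertion, that some $u\in\R^n$ solves the displayed system. Since $\langle\nabla g_i(\bar x),u\rangle\le 0$ for every $i\in I(\bar x)$ and $\langle\zeta^{*i},0\rangle=0$, we have $G^2_i(0,v)=(0,\langle\nabla g_i(\bar x),v\rangle)$, so that
$$L^2(X;\bar x,0)=\{v\in\R^n:\langle\nabla g_i(\bar x),v\rangle\le 0,\ i\in I(\bar x)\},$$
which contains $u$. Reparametrizing $s_k:=\tfrac12 t_k^2$ in the definition of the second-order tangent set gives $T^2(X;\bar x,0)=T(X;\bar x)$. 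Hence the hypothesis that $(ACQ)$ holds at $\bar x$, that is, \eqref{ASOCQ} with $u=0$, yields $u\in T(X;\bar x)$.

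Next I would use the definition of the tangent cone: there exist $t_k\downarrow 0$ and $u^k\to u$ with $\bar x+t_ku^k\in X$ for all $k\in\N$. Fix $l\in L$. Since each $f_l$ is strictly (hence Fr\'echet) differentiable at $\bar x$ and $\{u^k\}$ is bounded, we have $f_l(\bar x+t_ku^k)-f_l(\bar x)=t_k\langle\nabla f_l(\bar x),u^k\rangle+o(t_k)$, whence
$$\frac{f_l(\bar x+t_ku^k)-f_l(\bar x)}{t_k}\longrightarrow\langle\nabla f_l(\bar x),u\rangle<0 .$$
Thus there is $k_l\in\N$ with $f_l(\bar x+t_ku^k)<f_l(\bar x)$ for all $k\ge k_l$. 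Taking $k^\ast:=\max_{l\in L}k_l$, we obtain $f(\bar x+t_ku^k)<f(\bar x)$ for all $k\ge k^\ast$.

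Finally, since $\bar x+t_ku^k\to\bar x$, for all sufficiently large $k$ the point $x^k:=\bar x+t_ku^k$ lies in $U\cap X$, where $U$ is the neighbourhood from Definition \ref{efficient_def}(ii) witnessing the local weak efficiency of $\bar x$, and satisfies $f(x^k)<f(\bar x)$. This contradicts the local weak efficiency of $\bar x$, so the system admits no solution.

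I do not anticipate a genuine obstacle here: the only slightly delicate points are the identifications $L^2(X;\bar x,0)=\{v:\langle\nabla g_i(\bar x),v\rangle\le 0,\ i\in I(\bar x)\}$ and $T^2(X;\bar x,0)=T(X;\bar x)$, which are needed to read $(ACQ)$ off of \eqref{ASOCQ}, and the uniformity of the remainder $o(t_k)$ as $u^k$ varies, which follows from the boundedness of $\{u^k\}$ together with Fr\'echet differentiability of $f_l$ at the single point $\bar x$.
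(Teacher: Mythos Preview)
The paper does not actually supply a proof of this theorem; it is quoted from \cite[Theorem~3.1]{Huy162} and used as a known first-order result. So there is no in-paper argument to compare against.

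Your argument is correct and is the standard one. The two identifications you flag are exactly what the paper has in mind when it writes that \eqref{ASOCQ} at $u=0$ ``reduces to the well-known Abadie constraint qualification $(ACQ)$'': with $u=0$ one has $G^2_i(0,v)=(0,\langle\nabla g_i(\bar x),v\rangle)$, so $L^2(X;\bar x,0)$ is the linearized cone, and the substitution $s_k=\tfrac12 t_k^2$ gives $T^2(X;\bar x,0)=T(X;\bar x)$. From there your contradiction via the tangent cone and the first-order expansion of each $f_l$ is clean; the remainder estimate $o(t_k)$ is justified precisely as you note, since $\|t_ku^k\|/t_k=\|u^k\|$ stays bounded. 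One cosmetic remark: you do not really need to route through $L^2$ and $T^2$ at all---you can simply invoke the classical $(ACQ)$ (linearized cone $\subset$ tangent cone) directly, which is how the paper and \cite{Huy162} understand the hypothesis.
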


Let $\bar x\in X$ and $u\in\R^n$. We say that $u$ is a  {\em critical direction} at $\bar x$ if 
\begin{align*}
	\langle \nabla f_l(\bar x), u\rangle&\leq 0, \ \ \ \forall l\in L,
	\\
	\langle \nabla f_l(\bar x), u\rangle&= 0, \ \ \ \mbox{for at least one } \ \ l\in L,
	\\
	\langle \nabla g_i(\bar x), u\rangle&\leq 0, \ \ \ \forall i\in I(\bar x).
\end{align*}
The set of all critical direction of   \eqref{problem} at $\bar x$ is denoted by $\mathcal{C}(\bar x)$.  For each $u\in \mathcal{C}(\bar x)$, put
 \begin{equation*}
 	\mathcal{C}(\bar x, u):=\{w\in\R^n\,:\,\langle\nabla g_i(\bar x), w\rangle\leq 0,\ \ \ i\in I(\bar x; u)\}
 \end{equation*}
and 
\begin{equation*}
L(\bar x; u):=\{l\in L: \langle \nabla f_l(\bar x), u\rangle=0\}.
\end{equation*}
 The following theorem gives some second-order KKT necessary optimality conditions for a local weak efficient solution to  problem \eqref{problem}.
 \begin{thm}\label{theo2} Let $\bar x$ be a local  weak efficient solution  to problem \eqref{problem}. Suppose that the \eqref{ASOCQ} holds at $\bar x$ for any critical direction. Let $\bar u$ be a critical direction at $\bar x$. Then, there exist  $\lambda\in \R^m_+\setminus\{0\}$ and $\mu\in\R^m_+$  such that
 	\begin{align}
 		&\sum_{l=1}^m\lambda_l\nabla f_l(\bar x)+\sum_{i=1}^p\mu_i\nabla g_i(\bar x)=0, \label{equa:22}\\
 		&\sum_{l=1}^m\lambda_l\langle \xi^{*l}, \bar u\rangle+\sum_{i=1}^p\mu_i\langle \zeta^{*i}, \bar u\rangle\geq 0,\label{equa:23}
 		\\
 		&\lambda_l=0, l\notin L(\bar x; \bar u)  \label{equa:25}\\
 		& \mu_i=0,  i\notin I(\bar x; \bar u),\label{equa:24} 
 		\\
 		&{\sum_{l=1}^m\lambda_l\langle\nabla f_l(\bar x), w\rangle\geq 0, \ \ \ \forall w\in \mathcal{C}(\bar x, \bar u)\cap (\bar u)^{\bot}, }\label{equa:26}
 	\end{align} 
 	where 
 $$(\bar u)^{\bot}:=\{u\in\R^n:\langle \bar u, u\rangle=0\}.$$
 \end{thm}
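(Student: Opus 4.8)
The plan is to derive the conclusion by combining the second-order tangent set estimate from Proposition~\ref{lemma1}, the constraint qualification \eqref{ASOCQ}, and a separation argument applied to a suitable convex cone. First I would fix the critical direction $\bar u$ and set up the two index sets $L(\bar x;\bar u)$ and $I(\bar x;\bar u)$. The local weak efficiency of $\bar x$ forbids the existence of feasible arcs of the form $\bar x + t_k\bar u + \tfrac12 t_k^2 v^k$ along which all components $f_l$ strictly decrease to second order; this is exactly where the second-order tangent set enters. Using \eqref{ASOCQ}, which gives $L^2(X;\bar x,\bar u)\subset T^2(X;\bar x,\bar u)$, one translates ``no such feasible arc exists'' into the statement that a certain system in the variable $v$ (and an auxiliary scalar) has no solution: namely, there is no $v\in\R^n$ with $G^2_i(\bar u,v)\leqq_{\rm lex}(0,0)$ for $i\in I(\bar x)$ and simultaneously $\langle\nabla f_l(\bar x),v\rangle+\langle\xi^{*l},\bar u\rangle<0$ for all $l\in L(\bar x;\bar u)$ (together with the first-order criticality data that pins down which $F^2_l$ and $G^2_i$ have vanishing first coordinate). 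The key technical point is to check, via the Taylor formula of Theorem~\ref{Taylor_formula}, that solvability of this $v$-system would produce an admissible second-order feasible arc along which $f$ strictly decreases, contradicting weak efficiency.

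Next I would convert this infeasibility into the multiplier relations by a Motzkin-type alternative theorem (or equivalently a convex separation). The relevant cone is generated by the vectors $\nabla f_l(\bar x)$ for $l\in L(\bar x;\bar u)$ and $\nabla g_i(\bar x)$ for $i\in I(\bar x;\bar u)$, with the ``constant'' second-order shift terms $\langle\xi^{*l},\bar u\rangle$, $\langle\zeta^{*i},\bar u\rangle$ playing the role of right-hand sides. Because only the active-with-vanishing-first-coordinate indices $I(\bar x;\bar u)$ survive the lexicographic ordering (for $i\in I(\bar x)\setminus I(\bar x;\bar u)$ the strict inequality $\langle\nabla g_i(\bar x),\bar u\rangle<0$ makes the lex-condition automatic and the constraint inactive in the second-order system), the alternative theorem yields $\lambda_l\geq 0$, $l\in L(\bar x;\bar u)$, not all zero, and $\mu_i\geq 0$, $i\in I(\bar x;\bar u)$, such that $\sum\lambda_l\nabla f_l(\bar x)+\sum\mu_i\nabla g_i(\bar x)=0$ and $\sum\lambda_l\langle\xi^{*l},\bar u\rangle+\sum\mu_i\langle\zeta^{*i},\bar u\rangle\geq 0$. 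Extending $\lambda,\mu$ by zero on the complementary indices gives \eqref{equa:22}, \eqref{equa:23}, \eqref{equa:25}, \eqref{equa:24} at once, with $\lambda\in\R^m_+\setminus\{0\}$ since the $f$-part cannot be degenerate (if all $\lambda_l=0$ we would get a nontrivial nonnegative combination of $\nabla g_i(\bar x)$, $i\in I(\bar x;\bar u)$, equal to zero with $\sum\mu_i\langle\zeta^{*i},\bar u\rangle\geq 0$, which should be excluded—here one may need to invoke the first-order necessary condition Theorem~\ref{first_order-nec} under $(ACQ)$, the $u=0$ case of \eqref{ASOCQ}, to guarantee the $\nabla f_l(\bar x)$ are genuinely involved).

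For the last relation \eqref{equa:26}, I would restrict attention to the reduced set of directions $w\in\mathcal{C}(\bar x,\bar u)\cap(\bar u)^{\bot}$. Fixing the multipliers $(\lambda,\mu)$ just obtained and any such $w$, I would apply Theorem~\ref{Taylor_formula} to the scalarized function $\sum\lambda_l f_l + \sum\mu_i g_i$ along a perturbed arc $\bar x + t_k\bar u + \tfrac12 t_k^2(v + sw)$ or, more directly, consider the arc in the direction $w$ lying inside $T(X;\bar x)$ (the hypothesis $w\in\mathcal{C}(\bar x,\bar u)$ ensures the active constraints remain nonpositive to first order along $w$), and use weak efficiency once more together with \eqref{equa:22} to kill the first-order terms; the orthogonality $\langle\bar u,w\rangle=0$ is what makes the cross term between $\bar u$ and $w$ drop out, leaving $\sum\lambda_l\langle\nabla f_l(\bar x),w\rangle\geq 0$. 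I expect the main obstacle to be the bookkeeping in this last step: one must carefully set up a single second-order feasible arc that simultaneously feels the direction $\bar u$ (to invoke the $L^2\subset T^2$ inclusion) and a small perturbation in the direction $w$, estimate all the $C^{1,1}$ remainders uniformly via Proposition~\ref{property} and Theorem~\ref{Taylor_formula}, and then pass to the limit so that only the claimed inequality survives. A secondary subtlety is ensuring the alternative theorem is applied to the correct (closed) cone so that the multipliers are genuinely nonnegative and the normalization $\lambda\neq 0$ is legitimate.
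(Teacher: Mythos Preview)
Your plan for the first four conclusions \eqref{equa:22}--\eqref{equa:24} is essentially the paper's argument: show that for the fixed critical direction $\bar u$ the system
\[
\langle\nabla f_l(\bar x),v\rangle+\langle\xi^{*l},\bar u\rangle<0\ \ (l\in L(\bar x;\bar u)),\qquad
\langle\nabla g_i(\bar x),v\rangle+\langle\zeta^{*i},\bar u\rangle\le 0\ \ (i\in I(\bar x;\bar u))
\]
has no solution $v$ (via \eqref{ASOCQ}, $v\in T^2(X;\bar x,\bar u)$, and a Taylor estimate contradicting weak efficiency), and then invoke Motzkin's alternative. Two remarks here. First, your worry about ``$\lambda\neq 0$'' is unnecessary: Motzkin's theorem already forces the multipliers attached to the \emph{strict} inequalities to be not all zero, so $\lambda\in\R^m_+\setminus\{0\}$ comes for free; no extra appeal to Theorem~\ref{first_order-nec} is needed at this stage. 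Second, the paper does use Theorem~\ref{first_order-nec}, but earlier, to verify that any $u$ solving the full lex-system must in fact be critical so that \eqref{ASOCQ} applies; you implicitly assume this and should make it explicit.

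The genuine gap is your treatment of \eqref{equa:26}. You propose to build yet another second-order arc, apply Theorem~\ref{Taylor_formula} to the scalarized Lagrangian, and use weak efficiency together with $\langle\bar u,w\rangle=0$ to ``kill cross terms''. That route is both unnecessary and unlikely to close: weak efficiency controls the vector $f$, not the scalar $\sum\lambda_l f_l+\sum\mu_i g_i$, and there is no reason the latter should be locally minimized at $\bar x$ along such arcs. In the paper the inequality \eqref{equa:26} is a one-line \emph{algebraic} consequence of what you already have: pair \eqref{equa:22} with any $w$ to get
\[
\sum_{l=1}^m\lambda_l\langle\nabla f_l(\bar x),w\rangle=-\sum_{i=1}^p\mu_i\langle\nabla g_i(\bar x),w\rangle=-\sum_{i\in I(\bar x;\bar u)}\mu_i\langle\nabla g_i(\bar x),w\rangle,
\]
the second equality by \eqref{equa:24}; then $w\in\mathcal{C}(\bar x,\bar u)$ gives $\langle\nabla g_i(\bar x),w\rangle\le 0$ for $i\in I(\bar x;\bar u)$, so the right-hand side is $\ge 0$. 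Note that the orthogonality condition $w\in(\bar u)^\bot$ is not even used.
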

\begin{proof}
The proof of the theorem follows some ideals of \cite[Theorem 3.2]{Huy162}. By assumptions, we first show that the following system 
 \begin{align}
 	F^2_l(u, v)&<_{\rm lex} (0, 0),\ \ \ l\in L, \label{equ:20}
 	\\
 	G^2_i(u, v)&\leqq_{\rm lex} (0, 0),\ \ \  i\in I(\bar x),\label{equ:22}
 \end{align}
 has no solution $(u, v)\in\R^n\times\R^n$. Suppose on the contrary that there exists $(u, v)\in\R^n\times\R^n$ satisfying \eqref{equ:20}--\eqref{equ:22}. This implies that $v\in L^2 (X; \bar x, u)$ and
 \begin{eqnarray*}
 	\langle \nabla f_l(\bar x), u\rangle&\leq 0, \ \ \ &l\in L,
 	\\
 	\langle \nabla g_i(\bar x), u\rangle&\leq 0, \ \ \ &i\in I (\bar x).
 \end{eqnarray*}
 Since the \eqref{ASOCQ} holds at $\bar x$ for any critical direction, so this condition holds at $\bar x$ for the direction $0$. This  means that the $(ACQ)$ is satisfied at $\bar x$. By Theorem \ref{first_order-nec},  $\langle \nabla f_l(\bar x), u\rangle=0$ for at least one $l\in L$. Hence, $u$ is a critical direction of problem \eqref{problem} at $\bar x$. 
 Since the \eqref{ASOCQ} holds at $\bar x$ for the critical direction $u$, we have that $v\in T^2(X; \bar x, u).$ This implies that there exist  sequences  $v^k$ converging to $v$ and   $t_k\downarrow 0$ such that
 $$x^k:=\bar x+t_ku+\frac12t_k^2v^k\in X,\ \   \forall k\in\N.$$ 
 Fix any $l\in L$. We consider two cases of $l$ as follows.
 \\
 {\em Case 1.} $l\in L(\bar x; u)$, i.e., $\langle \nabla f_l(\bar x), u\rangle=0$.  It follows from \eqref{equ:20} that
 \begin{equation*} 
 	\langle\nabla f_l(\bar x), v\rangle+\langle\xi^{*l}, u\rangle<0.
 \end{equation*}
 An analysis similar to the one made in the proof of \eqref{equa-new-4} shows that there exists $\xi^l\in \partial^2f_l(\bar x)(u)$ such that 
 \begin{align*}
 	\limsup_{k\to\infty}\dfrac{f_l(x^k)-f_l(\bar x)}{\frac12t^2_k}&\leq \langle\nabla f_l(\bar x), v\rangle+\langle\xi^l, u\rangle \notag
 	\\
 	&\leq \langle\nabla f_l(\bar x), v\rangle+\langle\xi^{*l}, u\rangle<0.\label{equ:23}
 \end{align*}
This  implies that there exists $k_1\in\N$ such that $f_l(x^k)-f_l(\bar x)<0$ for all $k\geq k_1$.
\\
{\em Case 2.} $l\in L\setminus L(\bar x; u)$, i.e., $\langle \nabla f_l(\bar x), u\rangle<0$. Then
 $$\lim_{k\to\infty}\dfrac{f_l(x^k)-f_l(\bar x)}{t_k}=\langle \nabla f_l(\bar x), u\rangle<0.$$
Hence, there exists $k_2\in\N$ such that $f_l(x^k)-f_l(\bar x)<0$ for all $k\geq k_2$.

Put $k_0:=\max\{k_1, k_2\}$. Then we see that
$f_l(x^k)-f_l(\bar x)<0$ 
 for all $l\in L$ and $k\geq k_0$, which contradicts the fact that $\bar x$ is a local weak efficient solution of \eqref{problem}. 
 
We now fix any $\bar u\in \mathcal{C}(\bar x)$. Then, the above arguments show  that the following system 
 \begin{align*}
 	F^2_l(\bar u, v)&<_{\rm lex} (0, 0),\ \ \ l\in L,  
 	\\
 	G^2_i(\bar u, v)&\leqq_{\rm lex} (0, 0),\ \ \  i\in I(\bar x), 
 \end{align*}
 has no solution $v\in \mathbb{R}^n$. This means that the following system
 \begin{eqnarray*}
 	\langle \nabla f_l(\bar x), v\rangle+\langle \xi^{*i}, \bar x\rangle&< 0,\ \ \ \ &l\in L(\bar x; \bar u), \label{equ:24}
 	\\
 	\langle \nabla g_i(\bar x), v\rangle+ \langle \zeta^{*i}, \bar x\rangle&\leq 0,\ \ \ \ &i\in I(\bar x; \bar u),\label{equ:26}
 \end{eqnarray*}   
 has no solution $v\in\R^n$. By the Motzkin theorem of the alternative \cite[p. 28]{Mangasarian69}, there exist $\lambda\in \R^m_+\setminus\{0\}$ and $\mu\in\R^p_+$  such that
 \begin{align*}
 	&\sum_{l=1}^m\lambda_l\nabla f_l(\bar x)+\sum_{i=1}^p\mu_i\nabla g_i(\bar x)=0,  \\
 	&\sum_{l=1}^m\lambda_l\langle \xi^{*l}, \bar u\rangle+\sum_{i=1}^p\mu_i\langle \zeta^{*i}, \bar u\rangle\geq 0, 
 	\\
 	&\lambda_l=0, l\notin L(\bar x; \bar u)
 	\\
 	& \mu_i=0,  i\notin I(\bar x; \bar u).
 \end{align*} 
 We  now see that 
 $$\left\langle \sum_{l=1}^m\lambda_l\nabla f_l(\bar x)+\sum_{i=1}^p\mu_i\nabla g_i(\bar x), w\right\rangle=0$$
 for all $w\in \R^n$. Hence, if $w\in \mathcal{C}(\bar x; \bar u)\cap (\bar u)^{\bot}$, then we have that
 $$\sum_{l=1}^m\lambda_l \langle \nabla f_l(\bar x), w\rangle=-\sum_{i=1}^p\mu_i \langle\nabla g_i(\bar x), w\rangle=-\sum_{i\in I(\bar x, \bar u)}\mu_i \langle\nabla g_i(\bar x), w\rangle\geq 0.$$

 Since \eqref{equa:25} and $w\in \mathcal{C}(\bar x; \bar u)$, we have
 $$\sum_{l=1}^m\lambda_l \langle \nabla f_l(\bar x), w\rangle=-\sum_{i\in I(\bar x; \bar u)}\mu_i \langle\nabla g_i(\bar x), w\rangle\geq 0.$$
 Thus \eqref{equa:26} holds true. The proof is complete. 
\end{proof} 
 
\begin{remark}
Condition \eqref{equa:23} can be stated as follows:
\begin{equation*}
\sum_{l=1}^m\lambda_l\max\{\langle \xi^{l}, \bar u\rangle: \xi^l\in\partial^2f_l(\bar x)(u)\}+\sum_{i=1}^p\mu_i\max\{\langle \zeta^{i}, \bar u\rangle:\zeta^i\in\partial^2g_i(\bar x)(u)\}\geq 0.
\end{equation*}
Since the limiting second-order subdifferential is strictly smaller than the second-order symmetric subdifferential, our result Theorem \ref{theo2} improves the corresponding result  \cite[Theorem 3.2]{Huy162}.
\end{remark} 
 
 The vector $(\lambda, \mu)\in(\R^m_+\setminus\{0\})\times\R^p_+$ satisfying condition \eqref{equa:22}--\eqref{equa:26} is called {\em  a pair of weak second-order KKT multipliers}. If we can choose $(\lambda, \mu)\in(\R^m_+\setminus\{0\})\times\R^p_+$ such that $\lambda_l>0$ for all $l\in L$, then $(\lambda, \mu)$ is called {\em  a pair of strong second-order KKT multipliers}. 
    
The following theorem gives some  sufficient conditions of the strong second-order KKT form  for a local efficient solution of problem \eqref{problem}.
 \begin{thm} \label{KKT_sufficient}
 	Let $\bar x\in X$. Suppose that the $(ACQ)$ holds at $\bar x$ and for each $u\in \mathcal{C}(\bar x)\setminus\{0\}$ there exist  $\lambda\in\R^m_+$ and $\mu\in \R^p_+$ such that
 	\begin{align}
 		&\sum_{l=1}^m\lambda_l\nabla f_l(\bar x)+\sum_{i=1}^p\mu_i\nabla g_i(\bar x)=0,\label{equ:34}
 		\\
 		&\sum_{l=1}^m\lambda_l\langle \xi_*^l, u\rangle+\sum_{i=1}^p\mu_i\langle \zeta_*^i, u\rangle> 0,\label{equ:35}
 		\\
 		& \lambda_l> 0, \ \ \forall l\in L,\label{equ:37}
 		\\
 		& \mu_i=0,\ \ \ i\notin I(\bar x; u),\label{equ:36}
 		 		\\
 		&\sum_{l=1}^m\lambda_l\langle\nabla f_l(\bar x), w\rangle>0, \ \ \ \forall w\in \mathcal{C}(\bar x, u)\cap u^{\bot} \setminus\{0\}, \label{equa:17}
 	\end{align}
 then $\bar x$ is a local efficient solution of \eqref{problem}.     
 \end{thm}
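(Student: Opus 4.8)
The plan is to argue by contradiction, converting the failure of local efficiency into a one-dimensional Taylor estimate along a minimizing sequence. Assume $\bar x$ is not a local efficient solution of \eqref{problem}. Then there is $\{x^k\}\subset X$ with $x^k\to\bar x$, $x^k\neq\bar x$ and $f(x^k)\leq f(\bar x)$ for all $k$; set $t_k:=\|x^k-\bar x\|\downarrow 0$ and $u^k:=(x^k-\bar x)/t_k\in\mathbb{S}^n$, and pass to a subsequence with $u^k\to u$, $\|u\|=1$. Dividing $f_l(x^k)-f_l(\bar x)\le 0$ and, for $i\in I(\bar x)$, $g_i(x^k)-g_i(\bar x)=g_i(x^k)\le 0$ by $t_k$ and letting $k\to\infty$ (differentiability at $\bar x$) gives $\langle\nabla f_l(\bar x),u\rangle\le 0$ for all $l\in L$ and $\langle\nabla g_i(\bar x),u\rangle\le 0$ for all $i\in I(\bar x)$.

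The first substantive point is that $u\in\mathcal{C}(\bar x)\setminus\{0\}$, i.e.\ $\langle\nabla f_l(\bar x),u\rangle=0$ for at least one $l$. If this fails, so $\langle\nabla f_l(\bar x),u\rangle<0$ for every $l$, pick any critical direction $u^{\circ}\neq 0$ with multipliers $(\lambda,\mu)$ as in the hypothesis; evaluating \eqref{equ:34} at $u$ and using \eqref{equ:36} (so $\mu_i=0$ for $i\notin I(\bar x;u^{\circ})\subset I(\bar x)$) together with $\langle\nabla g_i(\bar x),u\rangle\le 0$ on $I(\bar x)$ and \eqref{equ:37} yields the impossible $0\le\sum_l\lambda_l\langle\nabla f_l(\bar x),u\rangle<0$. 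Hence $u$ is a critical direction and the hypothesis supplies $\lambda\in\R^m_+$, $\mu\in\R^p_+$ satisfying \eqref{equ:34}--\eqref{equa:17} relative to this $u$. Put $\varphi:=\sum_l\lambda_l f_l+\sum_i\mu_i g_i\in C^{1,1}(\R^n)$; then $\nabla\varphi(\bar x)=0$ by \eqref{equ:34}, and since $\lambda_l\ge 0$, $f_l(x^k)\le f_l(\bar x)$, $\mu_i\ge 0$, $\mu_i=0$ off $I(\bar x;u)$ and $g_i(x^k)\le 0=g_i(\bar x)$ on $I(\bar x;u)\subset I(\bar x)$, one has $\varphi(x^k)-\varphi(\bar x)\le 0$ for all $k$.

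Now comes the second-order argument. Applying Theorem \ref{Taylor_formula} to each $f_l$ and each $g_i$ ($i\in I(\bar x;u)$) with $a=\bar x$, $b=x^k=\bar x+t_ku^k$, and rescaling the outcome by the positive homogeneity of Proposition \ref{property}(i), I obtain $\tilde z^l_k\in\partial^2 f_l(\cdot)(u^k)$, $\tilde w^i_k\in\partial^2 g_i(\cdot)(u^k)$, evaluated at points of $[\bar x,x^k]$ (hence tending to $\bar x$), such that
\[
f_l(x^k)-f_l(\bar x)\ge t_k\langle\nabla f_l(\bar x),u^k\rangle+\tfrac12 t_k^2\langle\tilde z^l_k,u^k\rangle,\qquad g_i(x^k)-g_i(\bar x)\ge t_k\langle\nabla g_i(\bar x),u^k\rangle+\tfrac12 t_k^2\langle\tilde w^i_k,u^k\rangle .
\]
Multiplying by $\lambda_l,\mu_i\ge 0$ and summing, the first-order terms collapse to $t_k\langle\nabla\varphi(\bar x),u^k\rangle=0$, so with $\varphi(x^k)-\varphi(\bar x)\le 0$ we get $\sum_l\lambda_l\langle\tilde z^l_k,u^k\rangle+\sum_i\mu_i\langle\tilde w^i_k,u^k\rangle\le 0$ for all $k$. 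Because $\nabla f_l$, $\nabla g_i$ are locally Lipschitz near $\bar x$, the sequences $\{\tilde z^l_k\}$, $\{\tilde w^i_k\}$ are bounded; along a subsequence $\tilde z^l_k\to\tilde z^l$, $\tilde w^i_k\to\tilde w^i$, and the closedness in Proposition \ref{property}(iii) (in the form allowing the direction to vary, $u^k\to u$) gives $\tilde z^l\in\partial^2 f_l(\bar x)(u)$, $\tilde w^i\in\partial^2 g_i(\bar x)(u)$. Letting $k\to\infty$ and invoking the minimality defining $\xi_*^l$, $\zeta_*^i$ with $\lambda_l,\mu_i\ge 0$,
\[
\sum_{l=1}^m\lambda_l\langle\xi_*^l,u\rangle+\sum_{i=1}^p\mu_i\langle\zeta_*^i,u\rangle\ \le\ \sum_{l=1}^m\lambda_l\langle\tilde z^l,u\rangle+\sum_{i=1}^p\mu_i\langle\tilde w^i,u\rangle\ \le\ 0 ,
\]
contradicting \eqref{equ:35}. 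Therefore $\bar x$ is a local efficient solution.

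The step I expect to demand the most care is this limit passage with a \emph{varying} direction $u^k\to u$: Proposition \ref{property}(iii) is stated for a fixed direction, so it must first be strengthened to the joint closedness of $(x,v)\mapsto\partial^2 f_l(x)(v)$ (resp.\ $\partial^2 g_i$) for $C^{1,1}$ data — precisely where local Lipschitzness of the gradients is used, via a sum rule and the bound placing $\partial^2 f_l(x)(u^k)$ within $O(\|u^k-u\|)$ of $\partial^2 f_l(x)(u)$. Two further points: the reduction $u\in\mathcal{C}(\bar x)$ carried out above, and the role of \eqref{equa:17} — in the argument as written the strict inequality \eqref{equ:35} already closes things, while \eqref{equa:17} is the companion device for the residual, more degenerate configuration in which the parabolic part of $x^k-\bar x$ is genuinely of order $t_k^2$ and one must track, besides $u$, the limiting curvature direction, which lies in $\mathcal{C}(\bar x,u)\cap u^{\bot}$; there \eqref{equa:17} supplies the strict sign in place of \eqref{equ:35}.
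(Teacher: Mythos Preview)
Your second-order step is genuinely different from the paper's. The paper never applies Theorem~\ref{Taylor_formula} in the moving direction $u^k$; instead it writes
\[
f_l(x^k)-f_l(\bar x)=\bigl[f_l(\bar x+t_ku^k)-f_l(\bar x+t_ku)\bigr]+\bigl[f_l(\bar x+t_ku)-f_l(\bar x)-t_k\langle\nabla f_l(\bar x),u\rangle\bigr],
\]
uses the mean value theorem on the first bracket and Theorem~\ref{Taylor_formula} (in the \emph{fixed} direction $u$) on the second, and then sets $s_k:=\|u^k-u\|$, $w^k:=(u^k-u)/s_k$ and splits into three cases according to whether $s_k/t_k\to 0$, $s_k/t_k\to r>0$, or $s_k/t_k\to+\infty$. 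The first two cases contradict \eqref{equ:35}; the third produces a nonzero limit $w\in\mathcal{C}(\bar x,u)\cap u^\perp$ with $\sum_l\lambda_l\langle\nabla f_l(\bar x),w\rangle\le 0$, and \eqref{equa:17} closes the argument. Your route---Taylor along $u^k$, then collapse of the first-order part via $\nabla\varphi(\bar x)=0$---bypasses this trichotomy entirely and reaches the contradiction with \eqref{equ:35} directly, so \eqref{equa:17} never enters. The price is exactly the point you flag: one needs outer semicontinuity of $(x,d)\mapsto\partial^2 f_l(x)(d)$ jointly, not just in $x$. Your sketch for this (sum rule for Lipschitz functions plus $\partial^2 f_l(x)(u^k-u)\subset\|u^k-u\|\,L\,\mathbb{B}$) is correct and should be written out; once that is done, your proof is shorter than the paper's and shows that hypothesis \eqref{equa:17} is in fact superfluous.

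One small gap to repair: your argument that $u\in\mathcal{C}(\bar x)$ ``pick any critical direction $u^\circ\neq 0$'' presupposes $\mathcal{C}(\bar x)\setminus\{0\}\neq\emptyset$; if that set is empty the hypothesis provides no multipliers and your contradiction evaporates. The paper handles this step differently, invoking the $(ACQ)$ together with Theorem~\ref{first_order-nec} to force at least one $\langle\nabla f_l(\bar x),u\rangle=0$. That is where the $(ACQ)$ assumption, which you never use, is meant to be spent; you should insert it here rather than rely on the existence of an auxiliary $u^\circ$.
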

\begin{proof}
The proof of the theorem follows some ideals of \cite[Theorem 3.6]{Huy162}. Suppose on the contrary that $\bar x$ is not a local efficient solution of \eqref{problem}. Then, there exists a sequence $x^k\in X$ that converges to $\bar x$ and satisfies
 \begin{equation}\label{equ:38}
 	f(x^k)\leq f(\bar x), \ \ \ \forall k\in\N.
 \end{equation}
This implies that $x^k\neq \bar x$ for all $k\in\N$. Hence, for each $k\in\N$, put $t_k:=\|x^k-\bar x\|$. Then $t_k\downarrow 0$ as $k\to\infty$. Let $ u^k:=\frac1{t_k}(x^k-\bar x).$ Then, $\|u_k\|=1$. Without any loss of generality, we may assume that $\{u^k\}$ converges to some $u\in\R^n$ with $\|u\|=1$. By the mean value theorem for differentiable functions and \eqref{equ:38}, we have
 \begin{equation*}
 0\geq 	f_l(x^k)-f_l(\bar x)=t_k\langle \nabla f_l(\bar x), u^k\rangle +o(t_k),\ \ \ \forall k\in\N, l\in L. \label{critical:4}
 \end{equation*}
This implies that 
$$\langle \nabla f_l(\bar x), u \rangle=\lim_{k\to\infty}\langle \nabla f_l(\bar x), u^k \rangle=\lim_{k\to\infty}\dfrac{f_l(x^k)-f_l(\bar x)}{t_k}\leq 0, \ \ \forall l\in L.$$
Similarly, since $g_i(x^k)=g_i(x^k)-g_i(\bar x)\leq 0$ when $i\in I(\bar x)$, we obtain $$\langle \nabla g_i(\bar x), u \rangle\leq 0, \ \ \forall i\in I(\bar x).$$ 
By the  $(ACQ)$ and  Theorem \ref{first_order-nec}, there exists at least one  $l\in L$ such that $\langle \nabla f_l(\bar x), u \rangle=0$. This implies that  $u\in\mathcal{C}(\bar x)$ and $\|u\|=1$.

By assumptions, there exist  $\lambda\in\R^m_+$ and $\mu\in \R^p_+$ satisfying \eqref{equ:34}--\eqref{equa:17}.  It is easy to see from \eqref{equ:37} that $\langle \nabla f_l(\bar x), u \rangle= 0$ for all $l\in L$. Thus, we have 
 \begin{align*}
 	f_l(x^k)-f_l(\bar x)=[f_l(\bar x+t_ku^k)-f_l(\bar x+t_ku)]+[f_l(\bar x+t_ku)-f_l(\bar x)-t_k\langle\nabla f_l(\bar x), u\rangle].
 \end{align*}
 It follows from the differentiability of $f_l$ that there exists $\theta^{l_k}\in(\bar x+t_ku, x^k)$ satisfying 
 $$f_l(x^k)-f_l(\bar x+t_ku)=t_k\langle \nabla f_l(\theta^{l_k}), u^k-u\rangle.$$
By Theorem \ref{Taylor_formula} and  an analysis similar to the one made in the proof of \eqref{equa-new-5}, there exist $\gamma^{l_k}\in(\bar x, \bar x+t_ku)$ and  $\xi^{l_k}\in\partial^2f_l(\gamma^{l_k})(u)$ satisfying
 $$f_l(\bar x+t_ku)-f_l(\bar x) -t_k\langle\nabla f_l(\bar x), u\rangle \geq \frac12t^2_k\langle \xi^{l_k}, u\rangle.$$
Hence
 \begin{equation*} 
 0\geq f_l(x^k)-f_l(\bar x)\geq t_k\langle \nabla f_l(\theta^{l_k}), u^k-u\rangle+ \frac12t^2_k\langle \xi^{l_k}, u\rangle,
 \end{equation*}
or, equivalently,
\begin{equation}\label{equa:9}
	\langle \nabla f_l(\theta^{l_k}), u^k-u\rangle+ \frac12t_k\langle \xi^{l_k}, u\rangle\leq 0.
\end{equation}
 Similarly, for each $k\in \N$ and $i\in I(\bar x; u)$, there are $\tau^{i_k}\in (\bar x+t_ku, x^k)$, $\sigma^{i_k} \in (\bar x, \bar x+t_ku)$ and $\zeta^{i_k}\in \partial^2g_i (\sigma^{i_k})(u)$ such that
 \begin{equation*} 
 0\geq g_i(x^k)-g_i(\bar x)\geq t_k\langle\nabla g_i(\tau^{i_k}), u^k-u\rangle+\frac12t^2_k\langle \zeta^{i_k}, u\rangle,
 \end{equation*}
or, equivalently,
\begin{equation}\label{equa:10}
\langle\nabla g_i(\tau^{i_k}), u^k-u\rangle+\frac12t_k\langle \zeta^{i_k}, u\rangle\leq 0.
\end{equation}
 By Proposition \ref{property}, without loss any of generality, we may assume that $\xi^{l_k}$ (resp. $\zeta^{i_k}$) converges to $\xi^l\in \partial^2f_k(\bar x)(u)$ (resp. $\zeta^i\in \partial^2g_i(\bar x)(u)$). 
Combining \eqref{equa:9}, \eqref{equa:10}, and \eqref{equ:36}, we obtain
\begin{align}
 \sum_{l=1}^m\lambda_l\left[\langle \nabla f_l(\theta^{l_k}), u^k-u\rangle+ \frac12t_k\langle \xi^{l_k}, u\rangle\right]+\sum_{i=1}^p\mu_i\left[\langle\nabla g_i(\sigma^{i_k}), u^k-u\rangle+\frac12t_k\langle \zeta^{i_k}, u\rangle\right]\leq 0.\label{equa:11}
 \end{align}
 For  each $k\in\N$, put $s_k:=\|u^k-u\|$ and $w^k:=\frac{u^k-u}{s_k}$. Then, \eqref{equa:11} is equivalent to
\begin{align}
\sum_{l=1}^m\lambda_l\left[s_k\langle \nabla f_l(\theta^{l_k}), w^k\rangle+ \frac12t_k\langle \xi^{l_k}, u\rangle\right]+\sum_{i=1}^p\mu_i\left[s_k\langle\nabla g_i(\sigma^{i_k}), w^k\rangle+\frac12t_k\langle \zeta^{i_k}, u\rangle\right]\leq 0.\label{equa:12}
\end{align} 
Since $\|w^k\|=1$ for all $k\in\N$, without any loss of generality, we may assume that $w^k$ converges to some $w\in\R^n$ with $\|w\|=1$. 
By passing to subsequences if necessary we  may  consider three cases of sequences $t_k$ and $s_k$ as follows.
 \\
{\em Case 1.} $\displaystyle\lim_{k\to\infty}\frac{s_k}{t_k}=0$. Dividing the two sides of \eqref{equa:12} by $\frac 12t_k$ and then taking to the limit when $k\to\infty$ we obtain
$$\sum_{l=1}^m\lambda_l\langle \xi^l, u\rangle+\sum_{i=1}^m\mu_i\langle \zeta^i, u\rangle\leq 0.$$
Thus
 $$\sum_{l=1}^m\lambda_l\langle \xi_*^l, u\rangle+\sum_{i=1}^m\mu_i\langle \zeta_*^i, u\rangle\leq \sum_{l=1}^m\lambda_l\langle \xi^l, u\rangle+\sum_{i=1}^m\mu_i\langle \zeta^i, u\rangle\leq 0,$$
 contrary to \eqref{equ:35}.
 \\
 {\em Case 2.} $\displaystyle\lim_{k\to\infty}\frac{s_k}{t_k}=r>0$. Dividing the two sides of \eqref{equa:12} by $\frac 12t_k$ and then taking to the limit when $k\to\infty$ we obtain
 $$\sum_{l=1}^m\lambda_l[r\langle\nabla f_l(\bar x), w\rangle+\langle \xi^l, u\rangle]+\sum_{i=1}^m\mu_i [r\langle\nabla g_i(\bar x), w\rangle +\langle \zeta^i u\rangle]\leq 0.$$
This and \eqref{equ:34} imply that $$\sum_{l=1}^m\lambda_l\langle \xi^l, u\rangle+\sum_{i=1}^m\mu_i\langle \zeta^i, u\rangle\leq 0,$$ 
again contrary to \eqref{equ:35}.
 \\
{\em Case 3.} $\displaystyle\lim_{k\to\infty}\frac{s_k}{t_k}=+\infty$, or, equivalently,  $\displaystyle\lim_{k\to\infty}\frac{t_k}{s_k}=0$.  For each $k\in\N$, one has 
$$x^k=\bar x+t_ku^k=\bar x+t_ku+t_ks_kw^k.$$ 
Hence,
$$f_l(x^k)-f_l(\bar x)=[f_l(x^k)-f_l(\bar x+t_ku)]+[f_l(\bar x+t_ku)-f_l(\bar x)-t_k\langle\nabla f_l(\bar x), u\rangle]$$
 for all $l\in L$ and $k\in\N$. By  an analysis similar to the one made in the proof of \eqref{equa:9} we can find $y^{l_k}\in (\bar x+t_ku, x^k)$, $\gamma^{l_k}\in(\bar x, \bar x+t_ku)$ and  $\xi^{l_k}\in\partial^2f_l(\gamma^{l_k})(u)$ such that
 \begin{equation*}\label{equa:18}
 0\geq 	f_l(x^k)-f_l(\bar x)\geq t_ks_k\langle \nabla f_l(y^{l_k}), w^k\rangle+ \frac12t^2_k\langle \xi^{l_k}, u\rangle.
 \end{equation*}
Hence,
 \begin{equation}\label{equa:20}
 	\langle \nabla f_l(y^{l_k}), w^k\rangle+ \frac12\frac{t_k}{s_k}\langle \xi^{l_k}, u\rangle\leq 0.
 \end{equation}
 Letting $k\to\infty$ in \eqref{equa:20} we obtain $\langle \nabla f_l(\bar x), w\rangle\leq 0 $  for all $l\in L$ and so
 \begin{equation*}\label{equa:21}
 \sum_{i=1}^l\lambda_i\langle\nabla f_i(\bar x), w\rangle\leq 0. 
 \end{equation*}
We now show that $w\in K(\bar x, u)\cap u^{\bot} \setminus\{0\}$ and arrive at a contradiction. Indeed, since $u^k=u+r_kw^k\to u$, $w^k\to w$ as $k\to\infty$, and $u^k=u+r_kw^k\in \mathbb{S}^n$ for all $k\in \N$, we have $w\in T(\mathbb{S}^n; u)=u^{\bot}$.  Hence, $w\in K(\bar x, u)\cap u^{\bot} \setminus\{0\}$. The proof is complete.
\end{proof}  
\begin{remark}
	Condition \eqref{equ:35} can be stated as follows:
	\begin{equation*}
		\sum_{l=1}^m\lambda_l\min\{\langle \xi^{l}, \bar u\rangle: \xi^l\in\partial^2f_l(\bar x)(u)\}+\sum_{i=1}^p\mu_i\min\{\langle \zeta^{i}, \bar u\rangle:\zeta^i\in\partial^2g_i(\bar x)(u)\}> 0.
	\end{equation*}
Since the limiting second-order subdifferential is strictly smaller than the second-order symmetric one, our result Theorem \ref{KKT_sufficient} improves the corresponding one  \cite[Theorem 3.6]{Huy162}.
\end{remark}

\section{Conclusion} \label{Discussion}
By using the limiting second-order Taylor formula in the form of inequalities for $C^{1,1}$ functions, we obtain second-order KKT necessary optimality conditions for efficiency (Theorem \ref{theo2}) and a strong  second-order KKT sufficient optimality condition (Theorem \ref{KKT_sufficient}) for local efficient solutions of $C^{1,1}$ vector optimization problems with inequality constraints. These results improve and generalize the corresponding of Huy et al. \cite[Theorems 3.2 and 3.6]{Huy162} and of Feng and Li \cite{Feng_Li_2020}. By a similar way, we can also drive results that improve the corresponding ones of Huy et al. \cite[Theorems 3.3--3.5]{Huy162} and of Tuyen et al. \cite[Theorem 4.5]{Tuyen-Huy-Kim}.

\section*{Statements and Declarations} 
The author declares that he has no conflict of interest, and the manuscript has no associated data.

\section*{Acknowledgments.}
This research is funded by Hanoi Pedagogical University 2.

\end{document}